\newcommand{\Z}{\mathbb{Z}}
\newcommand{\Eps}{\mathsf{E}}
\newcommand\numberthis{\addtocounter{equation}{1}\tag{\theequation}}
\numberwithin{equation}{section}
\newcommand{\thetabar}{{\overline{\theta}}}
\newcommand{\oHbb}{\overline{\Hbb}}
\newcommand{\oC}{\overline{C}}
\newcommand{\zs}{\mathsf{z}}
\newcommand{\ws}{\mathsf{w}}
\newcommand{\onesp}{\overline{\ones}}
\newcommand{\Ri}{\mathsf{R}}
\newcommand{\Si}{\mathsf{S}}
\newcommand{\Ti}{\mathsf{T}}
\newcommand{\epsilonp}{\overline{\epsilon}}
\newcommand{\deltap}{\overline{\delta}}
\newcommand{\ps}{\mathsf{p}}
\newcommand{\qs}{\mathsf{q}}
\newcommand{\Link}{\mathcal{L}}
\newcommand{\Knot}{\mathcal{K}}
\newcommand{\HFKhat}{\widehat{\mathrm{HFK}}}
\newcommand{\var}{\mathsf{u}}
\newcommand{\y}{\mathbf{y}}
\newcommand{\w}{\mathbf{w}}
\newcommand{\z}{\mathbf{z}}
\newcommand\alphas{\mbox{\boldmath$\alpha$}}
\newcommand\betas{\mbox{\boldmath$\beta$}}
\newcommand{\lbl}{\llbracket}
\newcommand{\rbr}{\rrbracket}
\newcommand{\HFhat}{\widehat{\mathrm{HF}}}
\newcommand{\ones}{\mathbb{1}}
\newcommand{\fmapp}{\Fcal}
\newcommand{\fmap}{{\mathfrak{f}}}
\newcommand{\gmap}{{\mathfrak{g}}}
\newcommand{\hmap}{{\mathfrak{h}}}
\newcommand{\oH}{{\overline{\Hcal}}}
\newcommand{\x}{\mathrm{\bf{x}}}
\newcommand{\Sig}{\Sigma}
\newcommand{\ra}{\rightarrow}
\newcommand{\Hbb}{\mathbb{H}}
\newcommand{\Fbb}{\mathbb{F}}
\newcommand{\Fcal}{\mathcal{F}}
\newcommand{\Hcal}{\mathcal{H}}
\newcommand{\Ucal}{\mathcal{U}}
\newcommand{\hcal}{\mathcal{h}}
\newcommand{\mcal}{\mathcal{m}}
\newcommand{\Ker}{\mathrm{Ker}}
\newcommand{\Image}{\mathrm{Im}}
\newtheorem{thm}{\bfseries\color{black!50!blue} Theorem}[section]
\newtheorem{prop}[thm]{\bfseries\color{black!50!blue} Proposition}
\newtheorem{lem}[thm]{\bfseries\color{black!50!blue} Lemma}
\newtheorem{quest}[thm]{\bfseries\color{black!50!blue} Question}
\newtheorem{defn}[thm]{\bfseries\color{black!50!blue} Definition}
\def\endproof{\relax\ifmmode\expandafter\endproofmath\else
	\unskip\nobreak\hfil\penalty50\hskip.75em\hbox{}\nobreak\hfil\bull
	{\parfillskip=0pt \finalhyphendemerits=0 \bigbreak}\fi}
\def\endproofmath$${\eqno\bull$$\bigbreak}
\def\bull{\vbox{\hrule\hbox{\vrule\kern3pt\vbox{\kern6pt}\kern3pt
			\vrule}\hrule}}
\begin{document}
\title{Skein exact triangles in knot Floer homology }%
\author{Eaman Eftekhary}%
\institution{\sf{School of Mathematics, Institute for Research in Fundamental Sciences (IPM), Tehran, Iran}}
% ----------------------------------------------------------------
\maketitle
% ----------------------------------------------------------------
\begin{abstract}
	We construct a new family of skein exact triangles for link Floer homology. The skein triples are described by a triple of rational tangles $(R_0,R_1,R_{2n+1})$, where $R_0$ is the trivial tangle and $R_k$ is obtained from it by applying $k$ positive half-twists. We also set up an appropriate framework for potential construction of further skein exact triangles corresponding to arbitrary triples of rational tangles. 
\end{abstract}
%\tableofcontents
\section{Introduction}
Link Floer homology, introduced by Ozsváth–Szabó \cite{OS-knot, OS-link} and Rasmussen \cite{Ras}, categorifies the Alexander polynomial for links in \( S^3 \) (see also \cite{Ef-LFH}).  
To a pointed link $\Link=(L,\ws,\zs)$, consisting of a link $L\subset S^3$ and alternating sets $\ws$ and $\zs$ of marked points on $L$, it assigns a bigraded module $\HFKhat(\Link)$ 
which is equipped with an action of the exterior algebra generated by the markings in $\zs$. 
The kernel of this action, \(\HFKhat(L)\), is a functorially-defined invariant of the underlying link \(L\) \cite{Ef-TQFT}, whose Euler characteristic recovers the symmetrized Alexander polynomial \(\Delta_L(x)\) \cite{OS-link}. This theory has profound applications, including detecting Seifert genus and Thurston norm \cite{OS-Thurston-norm, Ni-Thurston-norm}, 
bounding the $4$-ball genus \cite{OS-four-ball, OSS-upsilon,Hom-Wu:tau} and the unknotting number \cite{AE-unknotting, Ef-TR}, distinguishing fibered knots \cite{Ni-fibered-knots}, and extracting concordance properties \cite{J-concordance, AE-2,Zemke-1}, as well as  studying sutured manifolds \cite{J-sutured, Juh-taut} and surface embeddings \cite{OS-surface, JZ-slice-disks}.\\  

A central feature of link Floer homology is its behavior under local modifications, encoded in skein exact triangles. The oriented skein triangle \cite{OS-Exact-Seq} categorifies the Alexander skein relation \cite{Alexander-skein,Conway-polynomial}. Fix a triple \((K_+, K_-, K_0)\) of knots with identical diagrams outside a disk $D$, where $K_+\cap D$ is a positive crossing, $K_-\cap D$ is a negative crossing and $K_0$ is the oriented resolution of the crossing. If  \((\Knot_+,\Knot_-,\Knot_0)\) is obtained from \((K_+, K_-, K_0)\) by adding the fixed sets $\ws$ and $\zs$ of markings, there is an exact sequence  
\begin{center}  
	\begin{tikzcd}  
		\HFKhat(\Knot_0) \arrow[rr,"\fmap_-"] && \HFKhat(\Knot_+) \arrow[dl,"\fmap_0"] \\  
		& \HFKhat(\Knot_-) \arrow[ul,"\fmap_+"] &  
	\end{tikzcd}  
\end{center}
where \(\fmap_-\) has degree \((-1,0)\) and \(\fmap_0, \fmap_+\) have degree \((0,0)\). This triangle enables inductive calculations \cite{Manolescu-Ozsvath} and underpins surgery formulas \cite{OS-surgery,Ef-surgery,HL-surgery}. It has counterparts in sutured Floer homology, as discussed in \cite[Section 8]{AE-1}. Manolescu's unoriented skein triangle \cite{Ciprian-unoriented} relates the two resolutions  of crossing without consistent orientations. The maps in Manolescu's  triangle preserve Alexander grading, but not the homological grading. This triangle bridges link Floer homology with Khovanov homology \cite{Khovanov} and has been also used to study minimal genus problem for non-orientable surfaces bounding knots \cite{Levine-Zemke}. The maps in the triangle are used in the study of unoriented cobordisms between knots, as considered in \cite{Zemke-nonorientable}. \\

Despite their utility, the known skein triangles are limited to the aforementioned  local modifications. We investigate whether other triples of rational tangles \((R_0, R_1, R_2)\) induce exact triangles in \(\HFKhat\). A \emph{skein triple of type \((R_0, R_1, R_2)\)} consists of three pointed oriented links 
\begin{align*}
(\Knot_0=(K_0,\ws,\zs),\Knot_1=(K_1,\ws,\zs),\Knot_2=(K_2,\ws,\zs)),
\end{align*}
 which are  identical outside a ball \(B\), while $\Knot_\kappa\cap B$ is the rational tangle  \(R_\kappa\) in \(B\) (oriented from points \(\ps\subset \partial B\) to \(\qs\subset\partial B\)). The  markings on $\Knot_\kappa$ are given in the form of two fixed sets \(\ws \supset \ps\) and \(\zs \supset \qs\) (see Figure~\ref{fig:skein}).  
\begin{quest}\label{quest:main}  
	For a fixed triple \((R_0, R_1, R_2)\) of rational tangles, do there exist bigraded \(\Fbb\)-modules \(V_0, V_1, V_2\) such that every skein triple \((\Knot_0, \Knot_1, \Knot_2)\) of this type corresponds to an exact triangle
	\begin{center}  
		\begin{tikzcd}[row sep=large, column sep=large]  
			\HFKhat(\Knot_0) \otimes V_0 \arrow[rr,"\fmap_0"] && \HFKhat(\Knot_1) \otimes V_1 \arrow[dl,"\fmap_1"] \\  
			& \HFKhat(\Knot_2) \otimes V_2 \arrow[ul,"\fmap_2"] &  
		\end{tikzcd}  
	\end{center}
	with \(\fmap_0,\fmap_1\) of degree \((0, 0)\) and \(\fmap_2\) of degree \((-1,0)\)?
\end{quest}

\begin{figure}
	\def\svgwidth{0.8\textwidth}
	{\small{
					\begin{center}
							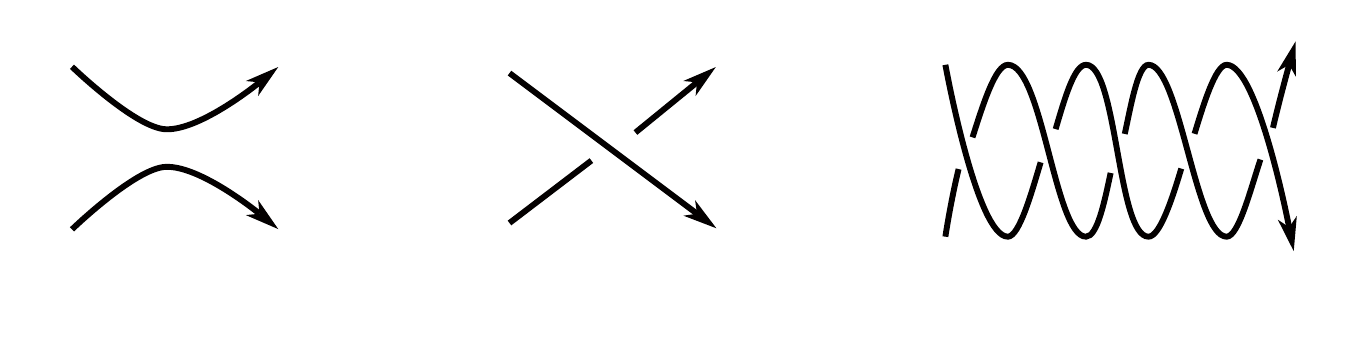
				\end{center}}}
	\caption{The trivial oriented tangle $R_0$ (left), the rational tangle $R_1$ obtained by a positive half-twist from it (middle) and  the rational tangle $R=R_{2n+1}$  obtained from $R_0$ by applying $2n+1$ positive half-twists form an exact triple of rational tangles.}
	\label{fig:skein}
\end{figure} 

Our main results address this question when \(R_0\) is the trivial tangle and \(R_1\) is a positive half-twist. By gluing \((B,R_\ell)\) to \((B,R_\kappa)\) via an orientation-reversing boundary diffeomorphism, we obtain pointed links \(\Link_{\kappa,\ell}\subset S^3\) decorated with the marked points in \(\ps\) and \(\qs\). When \(R_2 = R\) is an exact rational tangle (Definition~\ref{defn:technical-condition}), we show:

\begin{thm}\label{thm:exactness-intro}  
For a triple $(R_0,R_1,R)$ of rational tangles, where $R_0$ is the trivial tangle, $R_1$ is the positive half-twist and $R$ is an exact rational tangle,	
there exist bi-graded $\Fbb$-modules $V_0$ and $V_1$ such that for every skein triple \((\Knot_0, \Knot_1, \Knot_2)\) of type \((R_0, R_1, R)\), there is an exact triangle:
	\begin{center}  
		\begin{tikzcd}[row sep=large, column sep=large]  
			\HFKhat(\Knot_0) \otimes_\Fbb V_0 \arrow[rr,"\fmap"] && \HFKhat(\Knot_1) \otimes_\Fbb V_1 \arrow[dl,"\gmap"] \\  
			& \HFKhat(\Knot_2) \arrow[ul,"\hmap"] &  
		\end{tikzcd}  
	\end{center}  
where the maps $\fmap,\gmap$ and $\hmap$ are homogeneous with respect to the bigrading of degrees $(0,0)$, $(0,0)$ and $(-1,0)$, respectively. 
\end{thm}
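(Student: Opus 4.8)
The plan is to prove a chain-level refinement of the statement: I will show that $\widehat{\mathrm{CFK}}(\Knot_2)$ is bigraded chain homotopy equivalent to a two-step mapping cone
\[
\mathrm{Cone}\bigl(\widehat{\mathrm{CFK}}(\Knot_0)\otimes_\Fbb V_0\ \xrightarrow{\ \fmap\ }\ \widehat{\mathrm{CFK}}(\Knot_1)\otimes_\Fbb V_1\bigr),
\]
so that the asserted exact triangle is the long exact sequence of this cone, with $\gmap$ the inclusion of $\widehat{\mathrm{CFK}}(\Knot_1)\otimes V_1$ into the cone and $\hmap$ the connecting homomorphism. Since $\Knot_0,\Knot_1,\Knot_2$ agree outside $B$, the first step is to fix a multi-pointed Heegaard diagram whose attaching curves split into a piece carried by a diagram for the exterior $S^3\smallsetminus B$ — shared by every link that will appear — and a local piece supported near $B$ that records the tangle. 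Because $R_k$ is obtained from $R_{k-1}$ by one positive half-twist, the local curves for consecutive $R_k$ differ by an explicit finger move/handleslide, so the whole family $R_0,\dots,R_{2n+1}$, together with the capped-off tangle $R_\infty$ (which is unchanged under further twisting), is carried on a single surface that varies only inside $B$. In the bordered/sutured language underlying the framework set up in the paper, this reduces the statement to a relation among the local invariants attached to $\partial B$, which may then be glued to the invariant of the fixed exterior because the gluing pairing is exact.

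The main construction is then an iterated skein triangle. For each of the $2n+1$ crossings of $R_{2n+1}$ there is a local exact triangle of the same type as Manolescu's unoriented skein triangle \cite{Ciprian-unoriented}, relating the two smoothings of that crossing. Smoothing the twist region one crossing at a time, and resolving the resulting $R_\infty$-terms by the basic triad $(R_0,R_1,R_\infty)$, produces a large iterated mapping cone computing $\widehat{\mathrm{CFK}}(\Knot_2)$ all of whose strata are bigraded shifts of $\widehat{\mathrm{CFK}}(\Knot_0)$ and $\widehat{\mathrm{CFK}}(\Knot_1)$. The hypothesis that $R$ is an \emph{exact rational tangle} (Definition~\ref{defn:technical-condition}) is exactly what is needed to collapse this iterated cone to the two-step cone above: it should guarantee that the redundant strata, together with the holomorphic-polygon maps among them, form an acyclic subcomplex that can be cancelled, leaving $V_0$ and $V_1$ as the bigraded homologies of the surviving auxiliary factors. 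In particular $V_0$ and $V_1$ depend only on $R$, not on the ambient skein triple.

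It then remains to compute bidegrees. The maps $\fmap$ and $\gmap$ are sums of holomorphic-triangle maps for small triangles in the local diagram, so their Maslov and Alexander shifts are governed by the standard triangle grading formula; choosing the internal bigradings of $V_0$ and $V_1$ to absorb these shifts makes $\fmap$ and $\gmap$ homogeneous of degree $(0,0)$, and the connecting map $\hmap$, which drops one homological level, then has degree $(-1,0)$. The Alexander grading is respected throughout because the $\z$-markings $\qs$ never interact with the moves performed in the exterior, and independence of all the auxiliary choices (diagram, almost complex structure, order of smoothings) is the usual chain-homotopy invariance of holomorphic-polygon maps.

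The heart of the argument — and the step I expect to be the main obstacle — is the collapse in the second paragraph: showing that the exact-rational-tangle condition really forces the redundant subcomplex of the iterated cone to be acyclic. This cannot be read off from the lowest-order triangle counts; it requires controlling higher holomorphic polygon maps in a large multi-diagram, hence a careful admissibility analysis together with a neck-stretching/destabilization argument to pin down the local polygon counts, and a verification that the cancellation is compatible with the bigrading. Getting the grading shifts recorded in $V_0$ and $V_1$ exactly right, uniformly over all skein triples of the given type, is the remaining delicate bookkeeping point.
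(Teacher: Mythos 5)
There is a genuine gap, on two levels. First, your construction is organized around resolving the $2n+1$ crossings of a twist region one at a time, so it only ever addresses the special case $R=R_{2n+1}$ (Theorem~\ref{thm:twist-case-intro}); but Theorem~\ref{thm:exactness-intro} concerns an \emph{arbitrary} exact rational tangle $R$ in the sense of Definition~\ref{defn:technical-condition}, for which there is no distinguished twist region and no canonical iterated cone to build. The logical direction in the paper is the opposite of yours: the general statement is proved first, and the twist case is deduced from it by exhibiting $R_{2n+1}$ as an exact rational tangle. Second, even restricted to the twist case, the step you yourself flag as the main obstacle --- that the exactness hypothesis forces the ``redundant strata'' of the iterated cone, together with the higher polygon maps among them, to form an acyclic subcomplex that cancels --- is precisely the content of the theorem and is not supplied. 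Definition~\ref{defn:technical-condition} is a statement about a single triangle map $F(\cdot\otimes\ones_{0,1}):\HFKhat(\Link_{2,0})\to\HFKhat(\Link_{2,1})$ inducing an isomorphism of certain quotients; nothing in your outline converts this into the acyclicity you need, and your candidate local triangle (Manolescu's unoriented one) does not even preserve the homological grading, which sits uneasily with the asserted degrees $(0,0)$, $(0,0)$, $(-1,0)$.

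For contrast, the paper's proof never forms an iterated cone. It fixes one Heegaard quadruple $(\Sigma,\alphas,\betas_0,\betas_1,\betas_2,\ws,\zs)$ and invokes a purely algebraic criterion (Proposition~\ref{prop:exact-triangle-criteria}): exactness follows once one produces homotopies $H_\kappa$ and $P_\kappa$ with $[d,H_\kappa]=f_{\kappa-1}\circ f_{\kappa+1}$ and $[d,P_\kappa]=\mathrm{Id}+H_\kappa\circ f_\kappa+f_{\kappa-1}\circ H_{\kappa-1}$. These are built from holomorphic rectangle and pentagon counts via Proposition~\ref{prop:exact-triangle-quadruples}, and the required identities reduce to finite computations of triangle maps on the characteristic links $\Link_{\kappa,\ell}$, carried out in \eqref{eq:F-v0-theta-infty}--\eqref{eq:F-v+-v0} and Lemmas~\ref{lem:1st-composition-is-trivial}--\ref{lem:2-3-compositions-are-trivial}. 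In particular $V_0=\HFKhat(\Link_{2,0})/\Ker(\partial)$ and $V_1=\Ker(\partial|_{\HFKhat(\Link_{1,2})})$ are read off directly from the characteristic links rather than emerging from a cancellation; your proposal never identifies $V_0$ and $V_1$ for a general $R$. To salvage your route you would need, at minimum, to restrict to the twist case and then actually carry out the cancellation of the iterated cone with full control of the higher polygon maps --- at which point you would be reproving the computations the paper does more directly.
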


A special case of Theorem~\ref{thm:exactness-intro} occurs when \(R\) is obtained by \(2n+1\) half-twists from \(R_0\):

\begin{thm}\label{thm:twist-case-intro}  
	Let \(R_{n}\) be the rational tangle obtained from \(R_0\) by \(n\) positive half-twists and set \(V_k = \bigoplus_{i=1}^k \Fbb \langle v_i \rangle\) with \(\deg(v_i) = (i,i)\). Then for any skein triple \((\Knot_0, \Knot_1, \Knot_2)\) of type \((R_0, R_1, R_{2n+1})\), there is an exact triangle:
	\begin{center}  
		\begin{tikzcd}[row sep=large, column sep=large]  
			\HFKhat(\Knot_0) \otimes V_{2n} \arrow[rr,"\fmap"] && \HFKhat(\Knot_1) \otimes V_{2n+1} \arrow[dl,"\gmap"] \\  
			& \HFKhat(\Knot_2) \arrow[ul,"\hmap"] &  
		\end{tikzcd}  
	\end{center}
where the maps $\fmap,\gmap$ and $\hmap$ are homogeneous with respect to the bigrading.  
\end{thm}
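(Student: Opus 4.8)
The plan is to deduce Theorem~\ref{thm:twist-case-intro} from Theorem~\ref{thm:exactness-intro} by verifying that the tangle $R_{2n+1}$ (obtained from the trivial tangle by $2n+1$ positive half-twists) is an exact rational tangle in the sense of Definition~\ref{defn:technical-condition}, and then identifying the abstract modules $V_0$ and $V_1$ provided by that theorem with the explicit modules $V_{2n}$ and $V_{2n+1}$. First I would check the technical condition: because $R_{2n+1}$ is the ``simplest'' family of rational tangles — a chain of half-twists with no branching in the continued-fraction expansion — the associated bordered/tangle Floer invariant should be computable directly, and the exactness condition should reduce to an elementary count. The key point is that capping off $(B,R_{2n+1})$ against the trivial tangle yields an unknot (or a standard $(2,2n+1)$-torus link configuration depending on the closure), whose knot Floer homology is well understood, and this is precisely the input one needs to verify Definition~\ref{defn:technical-condition} for this family.

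Next I would compute the correction modules. Theorem~\ref{thm:exactness-intro} gives bigraded modules $V_0, V_1$ depending only on $(R_0, R_1, R)$; for $R = R_{2n+1}$ these are determined by the local Floer-homological data of the tangles, independent of the ambient link. I would compute them for the simplest possible skein triple of type $(R_0, R_1, R_{2n+1})$ — e.g. the one closing up to an unknot, a Hopf link, and a $(2,2n+1)$-torus knot respectively — where all three groups $\HFKhat(\Knot_\kappa)$ are known explicitly. Matching ranks and bigradings in the exact triangle then forces $V_0 \cong V_{2n}$ and $V_1 \cong V_{2n+1}$ with the stated grading $\deg(v_i) = (i,i)$; the grading shifts $(0,0)$, $(0,0)$, $(-1,0)$ on the maps are inherited verbatim from Theorem~\ref{thm:exactness-intro}.

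Alternatively — and this may be cleaner — I would prove the statement by induction on $n$, using the $n=0$ case as the base. For $n=0$, the triple is $(R_0, R_1, R_1)$, $V_{2n} = V_0 = 0$, and the triangle degenerates to the assertion that $\gmap\colon \HFKhat(\Knot_1)\otimes V_1 \to \HFKhat(\Knot_2)$ is an isomorphism, which holds because $\Knot_1$ and $\Knot_2$ are the same link. For the inductive step, I would relate the skein triple of type $(R_0,R_1,R_{2(n+1)+1})$ to that of type $(R_0,R_1,R_{2n+1})$ by an auxiliary exact triangle coming from resolving the two extra half-twists, and splice the two exact triangles together (an octahedral/mapping-cone argument), tracking how the module $V_{2n+1}$ grows to $V_{2n+3}$ by two summands in bidegrees $(2n+2,2n+2)$ and $(2n+3,2n+3)$.

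The main obstacle I anticipate is the grading bookkeeping — specifically, pinning down the absolute Maslov and Alexander shifts so that the summand $\Fbb\langle v_i\rangle$ lands in bidegree exactly $(i,i)$ rather than some globally shifted version. The Alexander gradings are comparatively rigid (they are constrained by the skein relation and the symmetry of $\HFKhat$), but the Maslov normalization requires care, since the gluing maps defining $\fmap,\gmap,\hmap$ come with grading shifts that depend on the framing and the number of half-twists. I would fix these by testing against the explicit torus-knot example, where $\HFKhat$ is a single thin staircase, so that any discrepancy in the normalization is immediately visible and correctable.
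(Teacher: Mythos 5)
Your opening paragraph identifies the same high-level route as the paper: show that $R_{2n+1}$ satisfies Definition~\ref{defn:technical-condition} and then invoke Theorem~\ref{thm:exactness-intro} (in its precise form, Theorem~\ref{thm:exactness}). But the two steps that carry all the content are left as assertions, and one of them is approached in a way that cannot work. The verification that $R_{2n+1}$ is exact is not an ``elementary count'' that can be waved at: one needs the characteristic links $\Link_{2,0}=T_{2,2n+1}$ and $\Link_{1,2}=-T_{2,2n}$ together with the \emph{induced differential} $\partial$ on their $\HFKhat$ (extra structure not contained in the familiar computation of $\HFKhat$ of torus links), plus the actual holomorphic triangle map $F(\cdot\otimes\ones_{0,1})$ of the triple $\Hcal_{2,0,1}$. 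The paper does this by exhibiting an explicit genus-zero Heegaard triple (Figure~\ref{fig:Htriple}), choosing bases $\theta_i,\thetabar_i$ and $\theta^i,\thetabar^i$ in which $\partial_{2,0}$ and $\partial_{1,2}$ take a normal form, and enumerating the small triangles $\Delta_i,\Delta'_i$ that contribute to $F$, concluding that $f(\theta_i)=\theta^i$ is an isomorphism $\HFKhat(\Link_{2,0})/\Ker(\partial_{2,0})\to\Ker(\partial_{1,2})/\langle\ones_{1,2}\rangle$. Without some version of this diagrammatic computation you have not established the hypothesis of Theorem~\ref{thm:exactness-intro}.

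The second gap is your method for identifying $V_0$ and $V_1$. They are not unknowns to be solved for by matching ranks in a single example: an exact triangle with unspecified maps does not determine its tensor factors even when all three vertices are known (exactness only constrains alternating sums of dimensions in each bidegree, and the ranks of $\fmap,\gmap,\hmap$ are not given). In the paper there is nothing to solve for --- Theorem~\ref{thm:exactness} produces the modules explicitly as $V_0=\HFKhat(\Link_{2,0})/\Ker(\partial)$ and $V_1=\Ker(\partial|_{\HFKhat(\Link_{1,2})})$, and the identification with $V_{2n}$ and $V_{2n+1}$ (including the bigradings $\deg(v_i)=(i,i)$) is then a direct computation for $T_{2,2n+1}$ and $T_{2,2n}$ using the same basis as above. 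Your fallback induction has the same difficulty in sharper form: splicing two exact triangles of homology groups via an ``octahedral/mapping-cone argument'' requires chain-level cone structures that you have not exhibited, and the auxiliary triangle ``resolving the two extra half-twists'' is itself a skein exact triangle of a new type whose existence is exactly the kind of statement the paper is at pains to prove. So the proposal has the right skeleton but is missing the Heegaard-diagram computation that constitutes the proof.
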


While Theorem~\ref{thm:twist-case-intro} could be proved directly, we derive it from Theorem~\ref{thm:exactness-intro} via Proposition~\ref{prop:exact-triangle-quadruples}, establishing a framework for future generalizations.

\subsection*{Acknowledgments}  
The author thanks Jiajun Wang for discussions and acknowledges Peking University and BICMR for their hospitality.

\section{The Algebraic Foundation}
We begin by establishing the algebraic framework for our skein exact triangles. For any chain map $f:(C,d_C)\to (C',d_{C'})$ between chain complexes, let $[d,f] = f\circ d_C + d_{C'}\circ f$ denote the commutator.

\begin{prop}\label{prop:exact-triangle-criteria}
	Let $(C_\kappa,d_\kappa)$ be chain complexes for $\kappa \in \Z/3 = \{0,1,2\}$ with chain maps:
	\begin{align*}
		f_\kappa:(C_\kappa,d_\kappa)\to (C_{\kappa+1},d_{\kappa+1})
	\end{align*}
	Suppose there exist homomorphisms $H_\kappa:C_{\kappa+1}\to C_\kappa$ and $P_\kappa:C_\kappa\to C_\kappa$ satisfying:
	\begin{align}\label{eq:exact-triangle-criteria-1}
		(1)\ [d,H_\kappa] &= f_{\kappa-1}\circ f_{\kappa+1} \\
		(2)\ [d,P_\kappa] &= \text{Id}_{C_\kappa} + H_\kappa\circ f_\kappa + f_{\kappa-1}\circ H_{\kappa-1}
	\end{align}
	Then the induced maps $\fmap_\kappa:H_*(C_\kappa,d_\kappa)\to H_*(C_{\kappa+1},d_{\kappa+1})$ form an exact triangle:
	\begin{center}
		\begin{tikzcd}
			H_*(C_0,d_0) \arrow[rr,"\fmap_0"] && H_*(C_1,d_1) \arrow[dl,"\fmap_1"] \\
			& H_*(C_2,d_2) \arrow[ul,"\fmap_2"] &
		\end{tikzcd}
	\end{center}
\end{prop}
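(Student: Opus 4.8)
# Proof Proposal for Proposition~\ref{prop:exact-triangle-criteria}

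The plan is to prove exactness at each of the three vertices by the standard cancellation-of-terms argument familiar from the algebraic topology of mapping cones, using relations (1) and (2) as the chain-homotopy data that forces the composites to vanish and the kernels to collapse onto images. By the cyclic symmetry of the hypotheses (everything is indexed by $\Z/3$), it suffices to verify exactness at one vertex, say at $H_*(C_1,d_1)$; the arguments at the other two vertices are obtained by relabeling $\kappa \mapsto \kappa+1$.

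First I would record the immediate consequence of relation (1): since $[d,H_\kappa] = f_{\kappa-1}\circ f_{\kappa+1}$ is a boundary in the appropriate Hom-complex, the induced map $\fmap_{\kappa-1}\circ\fmap_{\kappa+1}$ on homology is zero. In particular $\Image(\fmap_0) \subseteq \Ker(\fmap_1)$, which gives one of the two inclusions needed for exactness at $H_*(C_1)$. For the reverse inclusion, take a cycle $x \in C_1$ with $d_1 x = 0$ representing a class in $\Ker(\fmap_1)$, so $f_1(x) = d_2 y$ for some $y \in C_2$. The goal is to produce $z \in C_0$ with $d_0 z = 0$ and $f_0(z)$ homologous to $x$. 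The natural candidate, dictated by relation (2) with $\kappa = 1$, is to apply the identity
\begin{align*}
x = [d,P_1](x) - (H_1\circ f_1)(x) - (f_0\circ H_0)(x) = d_1 P_1 x - H_1(d_2 y) - f_0(H_0 x),
\end{align*}
using $d_1 x = 0$ and $f_1(x) = d_2 y$. Now I would rewrite $H_1(d_2 y)$ using relation (1) for $\kappa = 1$: since $[d, H_1] = f_0 \circ f_2$, we have $H_1 d_2 y = d_1 H_1 y - f_0 f_2 (y)$. Substituting back, $x = d_1(P_1 x - H_1 y) + f_0\big(f_2(y) - H_0 x\big)$. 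Hence $x$ is homologous to $f_0(z)$ where $z := f_2(y) - H_0(x)$; it remains to check $z$ is a cycle, i.e. $d_0 z = 0$. Since $f_2$ is a chain map and $d_2 y = f_1 x$, we get $d_0 f_2(y) = f_2 d_2 y = f_2 f_1 (x)$, while $d_0 H_0 x = [d,H_0](x) - H_0 d_1 x = f_2 f_1(x)$ using relation (1) for $\kappa = 0$ and $d_1 x = 0$. Therefore $d_0 z = f_2 f_1(x) - f_2 f_1(x) = 0$, and $z$ represents a class in $H_*(C_0)$ with $\fmap_0([z]) = [x]$. This establishes $\Ker(\fmap_1) \subseteq \Image(\fmap_0)$, completing exactness at $H_*(C_1)$.

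Finally I would invoke the $\Z/3$-symmetry: applying the same argument with indices shifted by one and by two yields exactness at $H_*(C_2)$ and at $H_*(C_0)$ respectively, since relations (1) and (2) are stated uniformly for all $\kappa \in \Z/3$. I do not expect a serious obstacle here — the proposition is a clean piece of homological algebra and the relations (1)–(2) have been reverse-engineered precisely to make the telescoping computation above go through. The only point requiring a little care is bookkeeping the signs and the order of composition in the commutator $[d,f] = f d_C + d_{C'} f$ (note the paper works over $\Fbb$, presumably $\Fbb = \Fbb_2$, so signs are not an issue, but one should still be attentive to which differential acts where). The genuinely substantive content of the paper — constructing the complexes $C_\kappa$ and the maps $f_\kappa, H_\kappa, P_\kappa$ from holomorphic-disk-counting data for the rational tangles and verifying relations (1)–(2) geometrically — lies entirely downstream of this proposition.
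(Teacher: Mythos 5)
Your proposal is correct and follows essentially the same argument as the paper: relation (1) gives the vanishing of consecutive composites on homology, relation (2) expresses a cycle in $\Ker(\fmap_\kappa)$ as a boundary plus $f_{\kappa-1}$ of an explicit element, and relation (1) is used again both to absorb the $H_\kappa(d_{\kappa+1}y)$ term into a boundary and to check that the preimage is closed. The only cosmetic difference is that you work at the fixed vertex $\kappa=1$ and appeal to cyclic symmetry, whereas the paper runs the identical computation for a general $\kappa$.
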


\begin{proof}
	Assumption (1) implies $\fmap_{\kappa-1}\circ\fmap_\kappa = 0$ for all $\kappa\in\Z/3$. For exactness, consider $z\in C_\kappa$ closed with $\fmap_\kappa[z] = 0$, so $f_\kappa(z) = d_{\kappa+1}(x)$. Using assumption (2):
	\begin{align*}
		z &= d_\kappa(P_\kappa(z)) + H_\kappa(f_\kappa(z)) + f_{\kappa-1}(H_{\kappa-1}(z)) \\
		&= d_\kappa(P_\kappa(z) + H_\kappa(x)) + f_{\kappa-1}(H_{\kappa-1}(z) + f_{\kappa+1}(x))
	\end{align*}
	The element $y = H_{\kappa-1}(z) + f_{\kappa+1}(x)$ is closed since:
	\begin{align*}
		d_{\kappa-1}(y) &= H_{\kappa-1}(d_\kappa(z)) + f_{\kappa+1}(d_{\kappa+1}(x)) = 0
	\end{align*}
	Thus $[y]$ satisfies $\fmap_{\kappa-1}[y] = [z]$, proving exactness.
\end{proof}

\subsection{Heegaard Quadruples and Triangle Maps}
Consider a Heegaard quadruple $\oH = (\Sigma,\alphas,\betas_0,\betas_1,\betas_2,\ws,\zs)$ where:
\begin{itemize}\setlength\itemsep{-0.3em}
	\item The surface $\Sigma$ has genus $g$ with $|\alphas| = |\betas_\kappa| = m + g$;
	\item $|\ws| = |\zs| = m + 1$;
	\item Each component of $\Sigma-\alphas$ and $\Sigma-\betas_\kappa$ has genus zero and contains exactly one marking from  $\ws$ and one marking from $\zs$.
\end{itemize}

 We treat the set $\ws$ of markings as punctures and define the homological grading on the generators using them. The second set $\zs$ of markings gives the Alexander filtration on the generators and recording the coefficients of the holomorphic disks at $\zs$ using the powers of the variable $\var\in \Fbb[\var]$ gives the minus version of the link Floer homology groups. For $\kappa,\ell,\mcal\in\Z/3=\{0,1,2\}$, we obtain the  sub-diagrams
\begin{align*}
&\oH_{\kappa}=(\Sig,\alphas,\betas_{\kappa},\ws,\zs),
 &&\oH_{\kappa,\ell}=(\Sig,\alphas,\betas_{\kappa},\betas_{\ell},\ws,\zs),\\
&\Hcal_{\kappa,\ell}=(\Sig,\betas_{\kappa},\betas_{\ell},\ws,\zs),
&&\Hcal_{\kappa,\ell,\mcal}=	(\Sig,\betas_{\kappa},\betas_{\ell},\betas_{\mcal},\ws,\zs)
\end{align*}	
of $\oH$. The  sub-diagram $\oH_{\kappa}$ determines a pointed link $\Knot_{\kappa}$ in a $3$-manifold $Y_{\kappa}$, which is assumed to be  the $3$-sphere $S^3$. The corresponding minus and hat knot Floer chain complexes are denoted by
 \begin{align*}
 &(C^-_{\kappa},d^-_{\kappa})\quad\text{and}\quad (C_{\kappa},d_{\kappa})=(C^-_{\kappa},d^-_{\kappa})\otimes_{\Fbb[\var]}\Fbb,
 \end{align*} 
respectively. Further, the sub-diagram $\Hcal_{\kappa,\ell}$ corresponds to  a pointed link $\Link_{\kappa,\ell}$ in a $3$-manifold $Y_{\kappa,\ell}$ which is assumed to be $\#^{g}S^1\times S^2$, and determines the minus and hat  Heegaard-Floer chain complexes 
\begin{align*}
&(C^-_{\kappa,\ell},d^-_{\kappa,\ell})\quad\text{and}\quad (C_{\kappa,\ell},d_{\kappa,\ell})=(C^-_{\kappa,\ell},d^-_{\kappa,\ell})\otimes_{\Fbb[\var]}\Fbb.
\end{align*}
 
 Let us denote the homology groups associated with the chain complexes  $(C_{\kappa},d_{\kappa})$, $(C_{\kappa,\ell},d_{\kappa,\ell})$, $(C^-_{\kappa},d^-_{\kappa})$ and $(C^-_{\kappa,\ell},d^-_{\kappa,\ell})$ by $\Hbb_{\kappa}$, $\Hbb_{\kappa,\ell}$, $\Hbb^-_{\kappa}$ and $\Hbb^-_{\kappa,\ell}$, respectively. 
 When $\kappa=\ell$, the homology group $\Hbb_{\kappa,\kappa}$ admits a top generator (with respect to the homological grading) which is denoted by $\ones_{\kappa}$. 
 Note that $\Hbb_{\kappa,\kappa}$ is generated by $\ones_{\kappa}$ under the homology action of 
 \begin{align*}
\wedge_{\kappa}:= \wedge^*H_1(Y_{\kappa,\kappa},L_{\kappa,\kappa})=\wedge^*H_1(\#^g S^1\times S^2, U_{m+1}),
\end{align*} 
where $U_{m+1}$ denotes the $(m+1)$-pointed unlink. 
We may regard the one-dimensional space generated by $\ones_{\kappa}$ as a quotient of $\Hbb_{\kappa,\kappa}$ by a sub-module $\Hbb^\star_{\kappa,\kappa}$ formed by generators in lower homological gradings.\\
 
The differentials $d^-_{\kappa}$ and  $d^-_{\kappa,\ell}$ induce differentials on $\Hbb_{\kappa}$ and $\Hbb_{\kappa,\ell}$, respectively, which are denoted by $\partial_{\kappa}:\Hbb_{\kappa}\ra\Hbb_{\kappa}$ and $\partial_{\kappa,\ell}:\Hbb_{\kappa,\ell}\ra\Hbb_{\kappa,\ell}$, respectively. Note that for $\kappa=\ell$, the latter differential is trivial.
The Heegaard triples $\oH_{\kappa,\ell}$ and $\Hcal_{\kappa,\ell,\mcal}$ determine holomorphic triangle chain maps:
\begin{align*}
	&F_{\kappa,\ell}: C_\kappa \otimes C_{\kappa,\ell} \to C_\ell \quad\text{and}\quad
	F_{\kappa,\ell,m}: C_{\kappa,\ell} \otimes C_{\ell,m} \to C_{\kappa,m}
\end{align*}
with minus versions $F^-_{\kappa,\ell}$ and $F^-_{\kappa,\ell,m}$. We simplify notation by writing $d$, $\partial$, $F$, etc., when indices are clear from context.
By further abuse of notation, the maps induced  by $F$ and $F^-$ in homology are also denoted by $F$ and $F^-$.   Note that $F(\cdot\otimes \ones_{\kappa}):\Hbb_{\kappa}\ra\Hbb_{\kappa}$ is the identity map. Rectangle and pentagon counts give maps $H,H^-,P,P^-$ satisfying the key identities
\begin{align*}
	&[d,H](\x\otimes\y\otimes \w)=F(\x\otimes F(\y\otimes\w))+F(F(\x\otimes\y)\otimes\w)\quad\text{and}\\
	&[d,P](\x\otimes\y\otimes\w\otimes\z)=
	H(\x\otimes\y\otimes F(\w\otimes\z))+H(\x\otimes F(\y\otimes \w)\otimes\z)+H(F(\x\otimes\y)\otimes \w\otimes\z)\\
	&\quad\quad\quad\quad\quad\quad\quad\quad\quad
	\quad\quad\quad\quad\quad\quad\quad\quad
	+F(\x\otimes H(\y\otimes\w\otimes\z))+F(H(\x\otimes\y\otimes\w)\otimes\z)).
	\numberthis\label{eq:compositios-law}
\end{align*}	 
for $\x,\y,\w$ and $\z$ in appropriate chain complexes. Similar identities are satisfied with $d^-,F^-,H^-$ and $P^-$ in place of $d,F,H$ and $P$, respectively.  

\subsection{Exact Triples}
Fix the disjoint index sets $I_{\kappa}$ for $\kappa\in \Z/3$  (the disjointness assumption is for notational simplicity and is clearly not important), and the sets $T_{\kappa}=\{\theta^j_k\}_{j\in I_{\kappa},k\in I_{\kappa+1}}$ of closed generators from $C_{\kappa,\kappa+1}$.  Assume that $(C_{\kappa}^j,d_{\kappa}^j)$ is a copy of $(C_{\kappa},d_{\kappa})$, with an appropriate shift $\lbl m_{\kappa}^j,a_{\kappa}^j\rbr$ in the homological/Alexander bigrading, so that
\begin{align*}
	m_{\kappa}^j=m_{\kappa+1}^k+M(\theta^j_k)-\delta_{\kappa}^2\quad\text{and}\quad a_{\kappa}^j=a_{\kappa+1}^k+A(\theta^j_k)	\quad\forall\ \kappa\in\Z/3,\ j\in I_{\kappa},\ k\in I_{\kappa+1}.
\end{align*}	
 Therefore, the distinguished generator of $(C_{\kappa}^j,d_{\kappa}^j)$ is in bigrading $(m_{\kappa}^j,a_{\kappa}^j+\tau(\Knot_{\kappa}))$. We then set 
\begin{align*}
(\oC_{\kappa},d_{\kappa})=\bigoplus_{j\in I_{\kappa}}(C_{\kappa}^j,d_{\kappa}^j),\quad\forall\ \kappa\in\Z/3=\{0,1,2\}.
\end{align*}
 We abuse the notation by denoting the differentials of $\oC_{\kappa}$ and $C_{\kappa}$ by the same notation $d_{\kappa}$. 
Define the chain maps $f_{\kappa}:C_{\kappa}\ra C_{\kappa+1}$ by setting the component of $f_{\kappa}:C_{\kappa}\ra C_{\kappa+1}$ from $C_{\kappa}^j$ to $C_{\kappa+1}^k$ 
equal to $F(\cdot\otimes \theta^j_k)$. The chain maps $f^j_k$ and $f_{\kappa}$  induce the maps $\fmap^j_k$ and $\fmap_{\kappa}$ in homology, respectively.  Moreover, $\fmap_0$ and $\fmap_1$ preserve the bigrading while $\fmap_2$ drops the homological grading by $1$ and preserves the Alexander grading. For $\kappa\in\Z/3$  set
\begin{align}\label{eq:Theta-I-defn}
	\Theta^j_l:=\sum_{k\in I_{\kappa}}\theta^j_k\otimes \theta^k_l,\quad \forall\ j\in I_{\kappa}, l\in I_{\kappa-1}
	\quad\quad\text{and}\quad\quad
	\Lambda^j_{i}:=\sum_{k\in I_{\kappa+1}, l\in I_{\kappa-1}} \theta^j_k\otimes\theta^k_l\otimes\theta^l_{i}
	\quad\forall i,j\in I_{\kappa}.
\end{align}
\begin{defn}
	The triple $(T_0,T_1,T_2)$ has \emph{trivial compositions} if for every $j\in I_\kappa$ and $l\in I_{\kappa+1}$ we have $F(\Theta^j_l) = d\eta^j_l$ for some $\eta^j_l$. It is \emph{exact} if additionally, for every $i,j\in I_\kappa$  we have 
	\begin{align}\label{eq:Delta-defn}
		\Delta^j_i := H(\Lambda^j_i) + \sum_k F(\theta^j_k \otimes \eta^k_i) + \sum_l F(\eta^j_l \otimes \theta^l_i) = \delta^j_i \cdot \ones_\kappa \mod \Hbb^\star_{\kappa,\kappa}.
	\end{align}
\end{defn}

With this setup and the above definitions in place, we are now ready to formulate a version of Proposition~\ref{prop:exact-triangle-criteria} for Heegaard qudruples.
\begin{prop}\label{prop:exact-triangle-quadruples}
 For an exact triple $(T_0,T_1,T_2)$ of generator sets,
  there exists an exact triangle:
\begin{center}
\begin{tikzcd}[ row sep=large, column sep=large]
\oHbb_0=\bigoplus_{j\in I_0}\Hbb_0^j\arrow[rr,"\fmap_0=(\fmap^j_k)_{j\in I_0,k\in I_1}"]
&&\oHbb_1=\bigoplus_{j\in I_1}\Hbb_1^j\arrow[dl,"\fmap_1=(\fmap^j_k)_{j\in I_1,k\in I_2}"]\\
&\oHbb_2=\bigoplus_{j\in I_2}\Hbb_2^j\arrow[ul,"\fmap_2=(\fmap^j_k)_{j\in I_2,k\in I_0}"]&
\end{tikzcd}.
\end{center}	
\end{prop}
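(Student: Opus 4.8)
The plan is to deduce the proposition from Proposition~\ref{prop:exact-triangle-criteria}, applied to the complexes $(\oC_\kappa,d_\kappa)$, $\kappa\in\Z/3$, together with the chain maps $f_\kappa\colon\oC_\kappa\to\oC_{\kappa+1}$; since $\oHbb_\kappa=H_*(\oC_\kappa,d_\kappa)$ and the $\fmap_\kappa$ are the maps induced by the $f_\kappa$, this yields precisely the asserted triangle. That each $f_\kappa$ is a chain map is immediate: its component from $C_\kappa^j$ to $C_{\kappa+1}^k$ is $F(\cdot\otimes\theta^j_k)$, and $[d,F(\cdot\otimes\theta^j_k)]=F(\cdot\otimes d\theta^j_k)=0$ because $\theta^j_k$ is closed. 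It therefore remains to construct homotopies $H_\kappa\colon\oC_{\kappa+1}\to\oC_\kappa$ and $P_\kappa\colon\oC_\kappa\to\oC_\kappa$ satisfying conditions~(1) and~(2) of that proposition.

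For $H_\kappa$, take its component from $C_{\kappa+1}^j$ to $C_\kappa^i$ to be $\sum_l H(\cdot\otimes\theta^j_l\otimes\theta^l_i)+F(\cdot\otimes\eta^j_i)$. Summing the first identity in \eqref{eq:compositios-law} over $l$, substituting $\sum_l F(\theta^j_l\otimes\theta^l_i)=F(\Theta^j_i)=d\eta^j_i$ (the trivial-compositions hypothesis) and using that $F$ is a chain map, the triangle and rectangle terms organize into a commutator and one obtains $[d,(H_\kappa)^i_j]=(f_{\kappa-1}\circ f_{\kappa+1})^i_j$, which is condition~(1). This step is routine bookkeeping.

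For $P_\kappa$, I would take its component from $C_\kappa^j$ to $C_\kappa^i$ to be
\begin{align*}
(P_\kappa)^i_j={}&\sum_{a,l}P(\cdot\otimes\theta^j_a\otimes\theta^a_l\otimes\theta^l_i)+\sum_a H(\cdot\otimes\theta^j_a\otimes\eta^a_i)+\sum_l H(\cdot\otimes\eta^j_l\otimes\theta^l_i)\\
&{}+\delta^j_i\,S_\kappa+F(\cdot\otimes\mu^j_i),
\end{align*}
where, after fixing cycle representatives, $S_\kappa$ is a chain homotopy with $[d,S_\kappa]=\mathrm{Id}+F(\cdot\otimes\ones_\kappa)$ --- available since $F(\cdot\otimes\ones_\kappa)$ induces the identity on $\Hbb_\kappa$ --- and $\mu^j_i\in C_{\kappa,\kappa}$ satisfies $d\mu^j_i=\Delta^j_i-\delta^j_i\ones_\kappa-\xi^j_i$, with $\xi^j_i$ a cycle representing the $\Hbb^\star_{\kappa,\kappa}$-component of $[\Delta^j_i]$. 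Summing the second (pentagon) identity in \eqref{eq:compositios-law} over $a$ and $l$ and reprocessing the five resulting families of terms exactly as for $H_\kappa$ --- each $\sum F(\theta\otimes\theta)$ becomes a $d\eta$, which is then re-expanded through the first identity --- the whole expression collapses into a commutator together with the single term $F(\cdot\otimes\Delta^j_i)$, with $\Delta^j_i$ the combination in \eqref{eq:Delta-defn}. Feeding in $d\mu^j_i$ and $[d,S_\kappa]$ then yields
\[
[d,(P_\kappa)^i_j]=\delta^j_i\,\mathrm{Id}+(H_\kappa\circ f_\kappa)^i_j+(f_{\kappa-1}\circ H_{\kappa-1})^i_j+F(\cdot\otimes\xi^j_i),
\]
so condition~(2) holds up to the chain map $\Xi_\kappa\colon\oC_\kappa\to\oC_\kappa$ whose component from $C_\kappa^j$ to $C_\kappa^i$ is $F(\cdot\otimes\xi^j_i)$.

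The genuine obstacle is to show that $\Xi_\kappa$ does not disturb the exactness conclusion; this is exactly where the fact that \eqref{eq:Delta-defn} is an equality only modulo $\Hbb^\star_{\kappa,\kappa}$ costs something. For $i=j$, \eqref{eq:Delta-defn} forces $[\Delta^j_j]=\ones_\kappa$ on the nose, since $\ones_\kappa$ spans the one-dimensional top homological degree of $\Hbb_{\kappa,\kappa}$; hence $\xi^j_j=0$. A grading count --- from the shift conventions together with the facts that $F$ preserves and $H$ raises the homological grading by one --- places $[\Delta^j_i]$ in homological degree $\deg\ones_\kappa+m_\kappa^j-m_\kappa^i$, so $\xi^j_i$ can be nonzero only when $m_\kappa^j<m_\kappa^i$. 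Thus $\Xi_\kappa$ is strictly upper triangular for the ordering of the finite set $I_\kappa$ by the shifts $m_\kappa^\bullet$, hence nilpotent: $\Xi_\kappa^N=0$ for $N=|I_\kappa|$. One then runs the proof of Proposition~\ref{prop:exact-triangle-criteria} while carrying the extra term: a closed $w$ with $\fmap_\kappa[w]=0$ gives $[w]=\fmap_{\kappa-1}[y_w]+[\Xi_\kappa w]$ for a closed $y_w$, and applying $\fmap_\kappa$ shows $\fmap_\kappa[\Xi_\kappa w]=\fmap_\kappa[w]=0$. Iterating this with $w=z,\ \Xi_\kappa z,\ \Xi_\kappa^2 z,\dots$ and telescoping, the error terms die after $N$ steps and $[z]\in\Image\fmap_{\kappa-1}$; exactness at the remaining two vertices is identical, and $\fmap_{\kappa-1}\circ\fmap_\kappa=0$ is immediate from~(1). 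The main difficulties are therefore the grading/upper-triangularity analysis of $\Xi_\kappa$ and the index bookkeeping in the pentagon computation of $(P_\kappa)^i_j$; everything else is formal.
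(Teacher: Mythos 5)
Your proof is correct and follows the same skeleton as the paper's: both reduce to Proposition~\ref{prop:exact-triangle-criteria}, with the identical homotopy $H_\kappa$ (rectangle term $H(\cdot\otimes\Theta^j_l)$ plus $F(\cdot\otimes\eta^j_l)$) and essentially the same pentagon-based $P_\kappa$. The one genuine divergence is how the ambiguity ``modulo $\Hbb^\star_{\kappa,\kappa}$'' in \eqref{eq:Delta-defn} is discharged. The paper absorbs it by invoking the fact that $F(\cdot\otimes x):C_\kappa\to C_\kappa$ is null-homotopic for every $x\in\Hbb^\star_{\kappa,\kappa}$ --- a topological input coming from $Y_\kappa\cong S^3$, since $\Hbb^\star_{\kappa,\kappa}$ is swept out by the $\wedge^*H_1$-action on $\ones_\kappa$ and this action is trivial on $\HFhat(S^3)$ --- which folds the entire discrepancy into a single homotopy $Q^j_i$ with $[d,Q^j_i]=\delta^j_i\,\mathrm{Id}+F(\cdot\otimes\Delta^j_i)$ and lands exactly in the hypotheses of Proposition~\ref{prop:exact-triangle-criteria}. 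You instead keep the discrepancy as an explicit chain map $\Xi_\kappa=(F(\cdot\otimes\xi^j_i))$, observe from the grading-shift conventions that its $(j,i)$-component can be nonzero only when $m_\kappa^j<m_\kappa^i$ (and that $\xi^j_j=0$ since the top degree of $\Hbb_{\kappa,\kappa}$ is one-dimensional), conclude nilpotency, and rerun the exactness argument with the error term telescoped away; I checked that the perturbed identity $[d,(P_\kappa)^i_j]=\delta^j_i\,\mathrm{Id}+(H_\kappa\circ f_\kappa)^i_j+(f_{\kappa-1}\circ H_{\kappa-1})^i_j+F(\cdot\otimes\xi^j_i)$ and the iteration both go through. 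Both routes are valid: the paper's is shorter and stays within the stated algebraic lemma but relies on an unproved (though standard) vanishing statement for the $\Hbb^\star$-action, while yours is more self-contained, needing only finiteness of the index sets, at the cost of reproving a perturbed version of Proposition~\ref{prop:exact-triangle-criteria}.
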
		
\begin{proof}
It suffices to show that the two conditions in Proposition~\ref{prop:exact-triangle-criteria} are satisfied for appropriate maps $H_{\kappa}:\oC_{\kappa+1}\ra \oC_{\kappa}$ and $P_{\kappa}:\oC_{\kappa}\ra \oC_{\kappa}$. For this purpose, let us assume that $F(\Theta^j_l)=d\eta^j_l$ for every $\kappa\in\Z/3$, $j\in I_{\kappa+1}$ and $l\in I_{\kappa}$, define $H_{\kappa}$ by setting its restriction from $C_{\kappa+1}^j$ to $C_{\kappa}^l$ equal to 
\begin{align*}
H^j_l=H(\cdot\otimes\Theta^j_l)+F(\cdot\otimes \eta^j_l),\quad\forall\ (j,l)\in I_{\kappa+1}\times I_{\kappa}.
\end{align*}
Since $H(\Lambda^j_{i})=\delta^j_{i}\ones_{\kappa}$ modulo $\Hbb_{\kappa,\kappa}^\star$, and since the holomorphic triangle map $F:C_{\kappa}\otimes\Hbb^\star_{\kappa,\kappa}\ra C_{\kappa}$ is chain homotopic to zero, there are maps $Q^j_{i}:C_{\kappa}\ra C_{\kappa}$ so that 
\begin{align*}
\delta^j_{i} Id_{C_{\kappa}}&=[d,Q^j_{i}]+F(\cdot\otimes \Delta^j_{i})\\
&=[d,Q^j_{i}]+F(\cdot\otimes H(\Lambda^j_{i}))+\sum_{k\in I_{\kappa+1}}F(\cdot\otimes F(\theta^j_k\otimes\eta^k_{i}))+\sum_{l\in I_{\kappa-1}}F(\cdot\otimes F(\eta^j_l\otimes\theta^l_{i})).	
\end{align*}	
We may then define $P_{\kappa}:\oC_{\kappa}\ra\oC_{\kappa}$ by setting its restriction from $C_{\kappa}^j$ to $C_{\kappa}^{i}$ 	
equal to 
\begin{align*}
P^j_{i}=P(\cdot\otimes\Lambda^j_{i})+\sum_{l\in I_{\kappa-1}}H(\cdot\otimes \eta^j_l\otimes\theta^l_{i})
+\sum_{k\in I_{\kappa+1}} H(\cdot\otimes \theta^j_k\otimes\eta^k_{i})
+Q^j_{i}.
\end{align*}
Here, $P$ denotes the pentagon map associated with the Heegaard $5$-tuple
$(\Sig,\alphas,\betas_{\kappa},\betas_{\kappa+1},\betas_{\kappa-1},\betas_{\kappa},\ws,\zs)$. It then follows that for $\x\in C_{\kappa}^j$, the $l$-th component $[d,H_{\kappa}]^l(\x)$ of $[d,H_{\kappa}](\x)$ is given by
\begin{align*}
[d,H_{\kappa}]^l(\x)&=[d,H](\x\otimes \Theta^j_l)+[d,F](\x\otimes \eta^j_l)+F(\x\otimes d(\eta^j_l))&&\\
&=F(\x\otimes (F(\Theta^j_l)+d(\eta^j_l)))+\sum_{k\in I_{\kappa+1}}F(F(\x\otimes \theta^j_k)\otimes\theta^k_l)&&\text{since }[d,F]=0,\\
&=	\sum_{k\in I_{\kappa+1}}f^k_l(f^j_k(\x))&&\text{since } F(\Theta^j_l)=d(\eta^j_l)\\
&=(f_{\kappa+1}\circ f_{\kappa})^l(\x),
\end{align*}	 
where $(\fmap_{\kappa+1}\circ\fmap_{\kappa})^l(\x)$ denotes the $l$-th component of $(\fmap_{\kappa+1}\circ\fmap_{\kappa})(\x)$. This proves that condition (1) from (\ref{eq:exact-triangle-criteria-1}) is satisfied. Similarly, note that
\begin{align*}
	[d,P_{\kappa}]^{i}(\x)&=[d,P](\x\otimes \Lambda^j_l)+\sum_{l\in I_{\kappa-1}}[d,H](\x\otimes\eta^j_l\otimes\theta^l_{i})
	+\sum_{k\in I_{\kappa+1}}[d,H](\x\otimes \theta^j_k\otimes\eta^k_{i})&&\\
	&\quad\ \ +[d,Q^j_{i}](\x)+\sum_{l\in I_{\kappa-1}}H(\x\otimes d(\eta^j_l)\otimes\theta^l_{i})
	+\sum_{k\in I_{\kappa+1}} H(\x\otimes \theta^j_k\otimes d(\eta^k_{i}))&&\\
	&=\delta^j_{i}Id_{C_{\kappa}}(\x)+\sum_{k\in I_{\kappa+1}}H(F(\x\otimes \theta^j_k)\otimes\Theta^k_{i})+\sum_{l\in I_{\kappa-1}}F(H(\x\otimes \Theta^j_l)\otimes \theta^l_{i})&&\\
	&\quad\quad+\sum_{l\in I_{\kappa-1}}H(\x\otimes (F(\Theta^j_l)+d(\eta^j_l))\otimes\theta^l_{i})
	+\sum_{k\in I_{\kappa+1}} H(\x\otimes \theta^j_k\otimes (F(\Theta^k_{i})+d(\eta^k_{i})))&&\\ 
	&\quad\quad+\sum_{l\in I_{\kappa-1}}F(F(\x\otimes \eta^j_l)\otimes\theta^l_{i})
	+\sum_{k\in I_{\kappa+1}} F(F(\x\otimes \theta^j_k)\otimes \eta^k_{i})&&\\
	&=(\delta^j_{i}Id_{C_{\kappa}}+H_{\kappa}\circ f_{\kappa}+f_{\kappa-1}\circ H_{\kappa-1})^{i}(\x).
\end{align*}
Therefore,  criteria (2) from (\ref{eq:exact-triangle-criteria-1}) is also satisfied.
This completes the proof of the proposition.
\end{proof}	 
\section{Skein Triples of Knot Diagrams and Link Floer Homology}\label{sec:skeins}
Let  $K$ be a knot in $S^3$ and $B\subset S^3$ a ball so that  $\partial B$  intersects $K$ transversely in $4$  points which are grouped as $\ps=\{p_1,p_2\}$ and  $\qs=\{q_1,q_2\}$. We  assume  $K\cap B$ consists of two oriented disjoint arcs  from $\ps$ to $\qs$, forming a $2$-tangle $R$ in  $B$. 
A \emph{replacement} changes $K$ to $K'$ by substituting $(B,R)$ with another $2$-tangle $(B,R')$ with $\partial R'= \ps\cup\qs$. We consider orientation-preserving replacements where strands in $R'$ go from $\ps$ to $\qs$. When both $R$ and $R'$ are rational tangles (obtained from the trivial tangle $(B,R_0)$ by applying diffeomorphisms of $B$ fixing $\ps\cup\qs$), we call this a \emph{rational replacement}. 
For such a replacement, the union
\begin{align*}
	(S^3,L_{R',R}) = (B,R')\cup\overline{(B,R)}
\end{align*}
forms an oriented link called the \emph{characteristic link}, with marked points $\ps$ and $\qs$ giving a 4-pointed link $\Link=(S^3,L,\ps,\qs)$.\\

\subsection{Exact Triples  of Rational Tangles}
A \emph{skein triple} $(K_0,K_1,K_2)$ consists of three oriented links identical outside $B\subset S^3$, where each $K_\kappa\cap B$ is a rational 2-tangle $R_\kappa$ connecting $\ps$ to $\qs$ ($\kappa\in\Z/3$). 
Extending $\ps,\qs$ to decorations $\ws=\ps\cup\ws_0$ and $\zs=\qs\cup\zs_0$ (with $|\ws_0|=|\zs_0|=m-1$) gives pointed links $\Knot_\kappa=(K_\kappa,\ws,\zs)$. The characteristic links are:
\begin{align*}
	L_{\kappa,\ell} = (B,R_\kappa)\cup\overline{(B,R_{\ell})}
\end{align*}
with pointed versions $\Link_{\kappa,\ell}$, which are decorated by $\ps$ and $\qs$. We assume that $L_{\kappa,\ell}$ is not the $2$-component unlink unless $\kappa=\ell$.\\

Fix a Heegaard quadruple $\oH=(\Sigma,\alphas,\betas_0,\betas_1,\betas_2,\ws,\zs)$ for $(\Knot_0,\Knot_1,\Knot_2)$, where $\Sigma$ has genus zero, $\oH_\kappa$ represents $\Knot_\kappa$ and $\Hcal_{\kappa,\ell}$ represents $\Link_{\kappa,\ell}\sqcup \Ucal_{m-1}$, where $\Ucal_{k}$ denotes a $k$-component unlink with a pair of markings on each one of its components. 
Since $R_0,R_1,R_2$ are rational, we can assume $\betas_\kappa=\hcal_\kappa(\betas)\cup\{\beta_\kappa\}$ with $\beta_\kappa$ splitting $\Sigma$ into two disks and separating the points in either of $\ps$ and  $\qs$ and $\hcal_\kappa$ a small an generic Hamiltonian isotopy. We abuse the notation by dropping $\hcal_\kappa$ and writing  $\betas_\kappa=\betas\cup\{\beta_\kappa\}$.\\

Define chain complexes $C_\kappa, C^-_\kappa, C_{\kappa,\ell}, C^-_{\kappa,\ell}$ as before, with hat differential $d=d_\bullet$ on $C_\bullet$, minus differential $d^-=d^-_\bullet$ on $C^-_\bullet$, triangle maps $F,F^-$ and rectangle maps $H,H^-$.
For appropriate $\betas_\kappa$, we have $d_{\kappa,\ell}=0$ and $d^-_{\kappa,\ell}=\var\cdot\partial_{\kappa,\ell}$. The modules satisfy:
\begin{align*}
	\Hbb_{\kappa,\ell} &= C_{\kappa,\ell} = \HFKhat(\Link_{\kappa,\ell})\otimes(\Fbb\oplus\Fbb\lbl 1,1\rbr)^{m-1} \quad\text{and}\quad
	H_*(C_{\kappa,\ell},\partial)= \HFhat(S^3,\ws) = (\Fbb\oplus \Fbb\lbl 1\rbr)^m
\end{align*}
The generators $\ones_{\kappa,\ell}, \onesp_{\kappa,\ell}$ correspond to the top and bottom generators of 
\begin{align*}
H_*(\HFKhat(\Link_{\kappa,\ell}),\partial)=\Fbb\oplus\Fbb\lbl 1\rbr,
\end{align*}
 paired with the top generator of $(\Fbb\oplus\Fbb\lbl 1,1\rbr)^{m-1}$.
If $\kappa,\ell\in\Z/3$ are different, the image of $\ones_{\kappa,\ell}\otimes\ones_{\ell,\kappa}$ under $F^-:\Hbb^-_{\kappa,\ell}\otimes_{\Fbb[\var]} \Hbb^-_{\ell,\kappa}\ra \Hbb^-_{\kappa,\kappa}$ is of the form $\var^{r(\kappa,\ell)}\ones_{\kappa}$. If $r(\kappa,\ell)=0$, pairing with $\ones_{\kappa,\ell}$ and $\ones_{\ell,\kappa}$ gives maps 
\begin{align*}
	\fmap:\HFKhat(\Link_{\ell,\kappa})\ra\HFKhat(\Link_{\ell,\ell})=\left(\Fbb\oplus\Fbb\lbl 1,1\rbr\right)^m
	\quad\text{and}\quad
	\gmap:\HFKhat(\Link_{\ell,\ell})=\left(\Fbb\oplus\Fbb\lbl 1,1\rbr\right)^m	\ra \HFKhat(\Link_{\ell,\kappa})
\end{align*}	
with $\gmap\circ\fmap=Id$. In particular, the homomorphism $\fmap$ is injective, which can not happen for $\kappa\neq \ell$. Therefore, $F(\ones_{\kappa,\ell}\otimes\ones_{\ell,\kappa})=0$, or equivalently $r(\kappa,\ell)>0$. Since $R$ is a rational tangle, it follows from \cite[Lemma 2.1]{Ef-RTR} that $r(a,b)\in\{0,1\}$ and  $F^-(\ones_{\kappa,\ell}\otimes\ones_{\ell,\kappa})=\var\cdot\ones_\kappa$.\\

	It follows from (\ref{eq:compositios-law}) for the minus-theory that $\partial\circ F=F\circ\partial$ is satisfied for all relevant triangle maps $F$ in the level of homology and if $\kappa,\ell,\mcal\in\Z/3$  and $H=H_{\kappa,\ell,\mcal,\kappa}$ is a corresponding holomorphic rectangle map, there are homomorphism $F':C_{\kappa,\ell}\otimes_\Fbb C_{\ell,\mcal}\ra  C_{\kappa,\mcal}$ (defined by counting holomorphic triangles with multiplicity $1$ at $\qs$) so that
\begin{align*}
	(\partial H+H\partial)(\x\otimes\y\otimes \w)
	&=F'\big(\x\otimes F(\y\otimes\w)\big)+F'\big(F(\x\otimes\y)\otimes\w\big)\\
	&\quad\quad\quad+F\big(\x\otimes F'(\y\otimes\w)\big)+F\big( F'(\x\otimes\y)\otimes\w\big).
	\numberthis\label{eq:compositios-law-2}
\end{align*}	
\begin{lem}\label{lem:vanifhing-of-F1}
	If $\kappa,\ell,\mcal\in\Z/3$ are different and $H=H_{\kappa,\ell,\mcal,\kappa}$ is the corresponding holomorphic rectangle map, for every 
	$\x\in \Hbb_{\kappa,\ell}=C_{\kappa,\ell},\y\in \Hbb_{\ell,\mcal}=C_{\ell,\mcal}$ and $\z\in \Hbb_{\mcal,\kappa}=C_{\mcal,\kappa}$ 
	we have 
	\begin{align*}
		(\partial H+H\partial)(\x\otimes\y\otimes \w)
		&=F'\big(\x\otimes F(\y\otimes\w)\big)+F'\big(F(\x\otimes\y)\otimes\w\big)
		\quad\quad{\text{modulo}\ \Hbb_{\kappa,\kappa}^\star}.
		\numberthis\label{eq:compositios-law-3}
	\end{align*}		
\end{lem}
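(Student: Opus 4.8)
The plan is to subtract the asserted identity from \eqref{eq:compositios-law-2}, reducing the lemma to a vanishing statement about holomorphic triangle maps with a diagonal target, and then to establish that vanishing by a grading count.

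Concretely, \eqref{eq:compositios-law-3} is equivalent to the assertion that
\[
F\big(\x\otimes F'(\y\otimes\w)\big)+F\big(F'(\x\otimes\y)\otimes\w\big)\in\Hbb^\star_{\kappa,\kappa}.
\]
Each of the two summands has the form $F(\afrac\otimes\bfrac)$ for the triangle map $F\colon C_{\kappa,\nu}\otimes C_{\nu,\kappa}\to C_{\kappa,\kappa}$, with $\nu=\ell$ in the first summand and $\nu=\mcal$ in the second; in both cases $\nu\neq\kappa$. So it suffices to prove the following claim: for $\kappa\neq\nu$, the composite of $F\colon C_{\kappa,\nu}\otimes C_{\nu,\kappa}\to C_{\kappa,\kappa}$ with the projection $C_{\kappa,\kappa}=\Hbb_{\kappa,\kappa}\to\Hbb_{\kappa,\kappa}/\Hbb^\star_{\kappa,\kappa}=\Fbb\langle\ones_\kappa\rangle$ is identically zero.

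To prove the claim I would use the two facts established just before the lemma: $F(\ones_{\kappa,\nu}\otimes\ones_{\nu,\kappa})=0$, while $F^-(\ones_{\kappa,\nu}\otimes\ones_{\nu,\kappa})=\var\cdot\ones_\kappa$ in the minus theory. Since $F^-$ is homogeneous for the bigrading and $\var$ has bidegree $(-2,-1)$, its $\var^0$-part $F$ carries the same bidegree as $F^-$; reading off the bidegrees in $F^-(\ones_{\kappa,\nu}\otimes\ones_{\nu,\kappa})=\var\ones_\kappa$ shows that the $\ones_\kappa$-component of $F$, as a homogeneous map into $\Fbb\langle\ones_\kappa\rangle$, can be nonzero only on the summand of $C_{\kappa,\nu}\otimes C_{\nu,\kappa}$ in bidegree $\deg(\ones_{\kappa,\nu})+\deg(\ones_{\nu,\kappa})+(2,1)$; in particular its homological grading must be two larger than the sum of the homological gradings of $\ones_{\kappa,\nu}$ and $\ones_{\nu,\kappa}$. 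On the other hand, because $\betas_\kappa=\betas\cup\{\beta_\kappa\}$ on the genus-zero surface $\Sigma$, the complex $C_{\kappa,\nu}=\Hbb_{\kappa,\nu}$ splits as a small complex in the two strands meeting $\beta_\kappa$ and $\beta_\nu$, tensored with the factor $(\Fbb\oplus\Fbb\lbl 1,1\rbr)^{m-1}$ coming from the Hamiltonian-isotopic copies of $\betas$; in each factor the distinguished generator has maximal homological grading, so $\ones_{\kappa,\nu}$ is homologically maximal in $C_{\kappa,\nu}$, and likewise $\ones_{\nu,\kappa}$ in $C_{\nu,\kappa}$. Hence no nonzero element of $C_{\kappa,\nu}\otimes C_{\nu,\kappa}$ has homological grading that large, the $\ones_\kappa$-component of $F$ vanishes identically, and the lemma follows.

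The step I expect to be the main obstacle is the homological maximality of $\ones_{\kappa,\nu}$ inside $C_{\kappa,\nu}$ (equivalently, that the spectral sequence abutting to $\HFhat(S^3,\ws)$ does not strip away any top generators): this is the ingredient that pushes the relevant graded piece of $F$ out of range, and it is where the geometry of the characteristic links enters. If one prefers to avoid analysing $C_{\kappa,\nu}$ directly, the same conclusion can be obtained in the minus theory, by showing that the $\ones_\kappa$-component of $F^-\colon C^-_{\kappa,\nu}\otimes C^-_{\nu,\kappa}\to\Fbb[\var]\langle\ones_\kappa\rangle$ is divisible by $\var$ — again a bidegree computation together with maximality of the distinguished generators in the minus complexes — and then reducing modulo $\var$. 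The remaining steps, namely the reduction of the lemma to the vanishing claim and the identity $\deg F=\deg F^-$, are routine bookkeeping with the composition laws.
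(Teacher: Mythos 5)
Your proposal follows the paper's own proof: both reduce, via the composition law \eqref{eq:compositios-law-2} together with $d_{\kappa,\kappa}=0$, to showing that $F\big(\x\otimes F'(\y\otimes\w)\big)+F\big(F'(\x\otimes\y)\otimes\w\big)$ vanishes modulo $\Hbb^\star_{\kappa,\kappa}$, which the paper disposes of by simply asserting that $\ones_\kappa$ lies in the image of neither $F:\Hbb_{\kappa,\ell}\otimes\Hbb_{\ell,\kappa}\to\Hbb_{\kappa,\kappa}$ nor $F:\Hbb_{\kappa,\mcal}\otimes\Hbb_{\mcal,\kappa}\to\Hbb_{\kappa,\kappa}$. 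Your grading count — pinning down $\deg F=\deg F^-$ from $F^-(\ones_{\kappa,\nu}\otimes\ones_{\nu,\kappa})=\var\cdot\ones_\kappa$ and invoking homological maximality of the distinguished generators — is a reasonable substantiation of exactly that assertion, which the paper leaves unjustified, so the two arguments are essentially the same.
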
	
\begin{proof}
	Given (\ref{eq:compositios-law-2}), since $d_{\kappa,\kappa}=0$, it suffices to prove that 
	\begin{align*}
		F\big(\x\otimes F'(\y\otimes\w)\big)+F\big( F'(\x\otimes\y)\otimes\w\big)=0\quad\quad{\text{modulo}\ \Hbb_{\kappa,\kappa}^\star}.	
	\end{align*}	
	Note that $\ones_{\kappa}$ is neither in the image of $F:\Hbb_{\kappa,\ell}\otimes \Hbb_{\ell,\kappa}\ra \Hbb_{\kappa,\kappa}$ nor in the image of $F:\Hbb_{\kappa,\mcal}\otimes \Hbb_{\mcal,\kappa}\ra \Hbb_{\kappa,\kappa}$. 
	This completes the proof of the lemma. 
\end{proof}	

 Let us denote $I_0,I_1$ and $I_2$ by $I,J$ and $K$, respectively and use $i,i'$, etc. to denote elements of $I$, use $j,j'$, etc. to denote elements of $J$ and use $k,k'$, etc. to denote elements of $K$. 
\begin{defn}\label{defn:special-triples}
The triple $(T_0,T_1,T_2)$ of closed generators, with 
\begin{align*}
T_0=\{\theta^i_j=\deltap^i_j\onesp_{0,1}+\epsilonp^i_j\ones_{0,1}\}_{i\in I,j\in J},\quad 
T_1=\{\theta^j_k\}_{j\in J,k\in K}\quad\text{and}\quad T_2=\{\theta^k_i=\partial\thetabar^k_i\}_{k\in K,i\in I}
\end{align*}
 is called a {\emph{special}} triple if for every $j\in J,k\in K$,  $\theta^j_k\in\Hbb_{1,2}$ is $\partial$-closed.
%\begin{itemize}\setlength\itemsep{-0.3em}
%\item For every $i\in I_0,k\in I_1$ we have $\theta^j_k=\deltap^j_k\onesp_{0,1}+\epsilonp^j_k\ones_{0,1}$;
%\item For every $j\in I_1,k\in I_2$,  $\theta^j_k\in\Hbb_{1,2}$ is $\partial$-closed;
%\item For every $j\in I_2,k\in I_0$,  we have $\theta^j_k=\partial \thetabar^j_k$ for some $\thetabar^j_k\in\Hbb_{2,0}$.	
%\end{itemize}	  	
\end{defn}	

We would like to examine the exactness condition, used in Proposition~\ref{prop:exact-triangle-quadruples}, for special triples of closed generators. Set
\begin{align*}
&\Ri^k_j:=\sum_{i\in I}\big(\deltap^i_jF(\thetabar^k_i\otimes \onesp_{0,1})+\epsilonp^i_jF(\thetabar^k_i\otimes\ones_{0,1})\big)\in\Hbb_{2,1}\\	
&\Si^i_k:=\sum_{j\in J}\big(\deltap^i_jF(\onesp_{0,1}\otimes \theta^j_k)+\epsilonp^i_jF(\ones_{0,1}\otimes\theta^j_k)\big)\in\Hbb_{0,2}\\	
&\Ti^j_i:=\sum_{k\in K}F(\theta^j_k\otimes\thetabar^k_i)\in\Hbb_{1,0}.
\end{align*}	
The triple has trivial compositions if $\Si^i_k=0$, while $\Ri^k_j$ and $\Ti^j_i$ are $\partial$-closed for every $i\in I,j\in J$ and $k\in K$. Moreover, since $\partial_{\kappa,\kappa}=0$ for all $\kappa\in\Z/3$, Lemma~\ref{lem:vanifhing-of-F1} implies that
\begin{align*}
\Lambda^i_{i'}&=\sum_{k\in K}F'(\Si^i_k\otimes \thetabar^k_{i'})+\sum_{j\in J}\big(\deltap^i_jF'(\onesp_{0,1}\otimes \Ti^j_{i'})+\epsilonp^i_j F'(\ones_{0,1}\otimes \Ti^j_{i'})\big)\\
&=\sum_{j\in J}\big(\deltap^i_jF'(\onesp_{0,1}\otimes \Ti^j_{i'})+\epsilonp^i_j F'(\ones_{0,1}\otimes \Ti^j_{i'})\big)\\
\Lambda^j_{j'}&=\sum_{k\in K}F'(\theta^j_k\otimes \Ri^k_{j'})+\sum_{i\in I}\big(\deltap^i_{j'}F'(\Ti^j_{i'}\otimes \onesp_{0,1})+\epsilonp^i_{j'} F'( \Ti^j_{i'}\otimes \ones_{0,1})\big)\\
\Lambda^k_{k'}&=\sum_{j\in J}F'(\Ri^k_j\otimes \theta^j_{k'})+\sum_{i\in I}F'(\thetabar^k_i\otimes \Si^i_{k'})\\
&=\sum_{j\in J}F'(\Ri^k_j\otimes \theta^j_{k'})
\end{align*}	
\subsection{Exact Rational Tangles and Triangle Maps}
We assume:
\begin{itemize}\setlength\itemsep{-0.3em}
	\item $R_0$ is the standard trivial tangle connecting $p_1$ to $q_1$ and $p_2$ to $q_2$
	\item $R_1$ is obtained from $R_0$ by a positive half-twist, connecting $p_1$ to $q_2$ and $p_2$ to $q_1$
	\item $R=R_2$ is an arbitrary rational tangle connecting $\ps$ to $\qs$
\end{itemize}

\begin{defn}\label{defn:technical-condition}
	A rational tangle $R$ is \emph{exact} if the map
	\begin{align*}
		F(\cdot\otimes \ones_{0,1}):\HFKhat(\Link_{2,0})\to\HFKhat(\Link_{2,1})
	\end{align*}
	coming from the Heegaard triple $\Hcal_{2,0,1}=(\Sig,\beta_2,\beta_0,\beta_1,\ps,\qs)$ induces an isomorphism
	\begin{align*}
		f:\frac{\HFKhat(\Link_{2,0})}{\Ker(\partial_{2,0})}\to\frac{\HFKhat(\Link_{2,1})}{\langle \Image(\partial_{2,1}), \ones_{1,2}\rangle_\Fbb}\simeq \frac{\Ker(\partial_{1,2})}{\langle\ones_{1,2}\rangle}.
	\end{align*}
\end{defn}

For $R$ exact, choose a basis $\{\theta_i\}_{i\in I}$ for $\HFKhat(\Link_{2,0})/\Ker(\partial)$, giving:
\begin{align*}
	C_{2,0} &= \langle \theta_i, \thetabar_i=\partial \theta_i \mid i\in I\rangle_\Fbb \oplus \langle \ones_{2,0},\onesp_{2,0}\rangle_\Fbb.
\end{align*}
The dual differential $\partial^*=\partial_{0,2}$ satisfies $\partial^*\thetabar_i=\theta_i$. Setting $\theta^i=f(\theta_i)$, we have:
\begin{align*}
	\Ker(\partial_{1,2}) &= \langle \theta^i \mid i\in I\rangle_\Fbb \oplus \langle \ones_{1,2}=:\theta^0\rangle.
\end{align*}
Assume $1\in I$ with $\theta^1=\onesp_{1,2}$. This gives a decomposition:
\begin{align*}
	\HFKhat(\Link_{1,2}) &= \langle \theta^i \mid i\in I\cup\{0\}\rangle_\Fbb \oplus \langle \thetabar^i=\partial^* \theta^i \mid i\in I\setminus\{1\}\rangle_\Fbb.
\end{align*}
The  cobordism associated with $\Hcal_{2,0,1}=(\Sig,\beta_2,\beta_0,\beta_1,\ps,\qs)$  determines a holomorphic triangle map
	\begin{align*}
		\fmapp:\HFKhat(\Link_{0,1})\otimes\HFKhat(\Link_{1,2})\otimes\HFKhat(\Link_{2,0})\ra \Fbb.
\end{align*}
Note that $\fmapp\circ \partial=0$. In the above decompositions for $\HFKhat(\Link_{0,1})$ and $\HFKhat(\Link_{1,2})$, we may choose $\theta_i,\theta^j,\ones_{2,0}$ and $\onesp_{2,0}$ so that the cobordism map $\fmapp$ is given as follows:
\begin{align*}
	&(1)\ \  \fmapp(\ones_{0,1}\otimes \theta^j\otimes \bullet)=\epsilon^i_j\delta^\bullet_{\theta_i}+\delta^1_j\delta^{\bullet}_{\ones_{2,0}},
	&&\text{for}\  i,j\in I,\ \bullet\in\{\ones_{2,0},\onesp_{2,0},\theta_i,\partial \theta_i\},\ \text{and some}\ \epsilon^i_j\in\Fbb,
	\\%--------------------------------
	&(2)\ \ \fmapp(\ones_{0,1}\otimes \theta^0\otimes \bullet)=\delta^\bullet_{\onesp_{2,0}},		
	&&\text{for}\  i\in I,\  \bullet\in\{\ones_{2,0},\onesp_{2,0},\theta_i,\partial \theta_i\},	
	\\%--------------------------------
	&(3)\ \  \fmapp(\onesp_{0,1}\otimes \theta^j\otimes \bullet)=\delta^\bullet_{\theta_j},
	&&\text{for}\   i,j\in I,\  \bullet\in\{\ones_{2,0},\onesp_{2,0},\theta_i,\partial \theta_i\}\quad\text{and}
	\\%--------------------------------
	&(4)\ \ \fmapp(\onesp_{0,1}\otimes \theta^0\otimes \bullet)=\delta^\bullet_{\ones_{2,0}},		
	&&\text{for}\  i\in I,\  \bullet\in\{\ones_{2,0},\onesp_{2,0},\theta_i,\partial \theta_i\}
	\numberthis\label{eq:triangle-count-1}
\end{align*}
Here $\delta^\bullet_\kappa$ denotes the Kronecker delta function which is zero unless $\bullet=\kappa$, when it gives $1$. 
The entries $\{\epsilon^i_j\}_{i,j\in I}$ form a matrix $\Eps$. In the above basis, we may also record other evaluations of $\fmapp$:
\begin{align*}	
	&(1)\ \  \fmapp(\ones_{0,1}\otimes \thetabar^j\otimes \bullet)=\epsilonp^i_j\delta^\bullet_{\theta_i}+\epsilon_j\delta^{\bullet}_{\ones_{2,0}}+
	\epsilonp_j\delta^{\bullet}_{\onesp_{2,0}},
	&&\text{for}\  i,j\in I,\ j\neq 1,\ \bullet\in\{\ones_{2,0},\onesp_{2,0},\theta_i\}\quad\text{and}
	\\%--------------------------------
	&(2)\ \  \fmapp(\onesp_{0,1}\otimes \thetabar^j\otimes \bullet)=\deltap^i_j\delta^\bullet_{\theta_i}+\delta_j\delta^{\bullet}_{\ones_{2,0}}+
	\deltap_j\delta^{\bullet}_{\onesp_{2,0}},
	&&\text{for}\  i,j\in I,\ j\neq 1,\ \bullet\in\{\ones_{2,0},\onesp_{2,0},\theta_i\},\numberthis\label{eq:triangle-count-2}
\end{align*}
for some $\epsilon^i_j,\epsilon_j,\epsilonp_j,\deltap^i_j,\delta_j,\deltap_j\in\Fbb$. We set $\epsilon^i_0=0$ for $i\in I$.
It follows from the above computations that
\begin{align}\label{eq:F-v0-theta-infty}
	F(\theta_i\otimes \ones_{0,1})=\sum_{k\in I}\epsilon^i_k \theta^k+\sum_{k\in I\setminus\{1\}}
	\epsilonp^i_k \thetabar^k
	\quad\text{and}\quad
	F(\theta_i\otimes \onesp_{0,1})=\sum_{k\in I}\delta^i_k \theta^k+\sum_{k\in I\setminus\{1\}}
	\deltap^i_k\thetabar^k.
\end{align}	
Moreover, for $j\in I\cup\{0\}$ we find
\begin{align}\label{eq:F-theta-infty-v+}
	&F(\ones_{0,1}\otimes \theta^j)=\delta^0_j\onesp_{2,0}+\delta^1_j\ones_{2,0}+\sum_{k\in I}\epsilon^k_j\theta_k
	\quad\text{and}\quad F(\onesp_{0,1}\otimes \theta^j)=\delta^0_j\ones_{2,0}+\sum_{k\in I}\delta^k_j\theta_k.
\end{align}	
The last computation which follows from the above considerations and is needed in the following discussions is the following:
\begin{align}\label{eq:F-v+-v0}
	&F(\theta^j\otimes \theta_i)=\epsilon^i_j\ones_{0,1}+\delta^i_j\onesp_{0,1},
	\quad\quad\forall\ i\in I,\ j\in I\cup\{0\}.
\end{align}

	%-------------------------------------
%-------------------------------------
\section{Exact Triangles for Exact Skeins}\label{sec:exact-triangles}

\subsection{Setup, Notation and Key Lemmas}
We apply Proposition~\ref{prop:exact-triangle-quadruples} to the Heegaard quadruple $\oH = (\Sig,\alphas,\betas_0,\betas_1,\betas_2,\ws,\zs)$ associated with an exact rational tangle. Here we have:

\begin{itemize}\setlength\itemsep{-0.3em}
	\item {\bf Index sets:} $I_2 = \{\star\}$ (single element), $I_0 = I$, and $I_1 = J = I \cup \{0\}$;
	\item  {\bf Generators:}
\begin{align*}
	&(1)\ \theta_i^\star := \thetabar_i = \partial\theta_i,
	&&(2)\ \theta_\star^j := \theta^j\quad\text{and}
	&&(3)\ \theta^i_j := \delta^j_i\ones_{0,1} + (\epsilon^j_i + \delta^1_i\delta^j_0)\onesp_{0,1},  &&\forall i \in I, j \in J.
\end{align*}	
%	\begin{align*}
%		\theta_i^\star &:= \thetabar_i = \partial\theta_i \\
%		\theta_\star^j &:= \theta^j \\
%		\theta^i_j &:= \delta^j_i\ones_{0,1} + (\epsilon^j_i + \delta^1_i\delta^j_0)\onesp_{0,1} \quad \forall i \in I, j \in J
%	\end{align*}
	\item  {\bf Vector spaces:} $V_2$ is the trivial vector space $\Fbb$, while
	\begin{align*}
		V_0 := \langle \theta_i \mid i \in I \rangle_\Fbb \quad\text{and}\quad
		V_1 := \langle \theta^j \mid j \in J \rangle_\Fbb
	\end{align*}
\end{itemize}
This gives us the triangle of maps:
\begin{center}
	\begin{tikzcd}[row sep=large, column sep=large]
		\HFKhat(\Knot_0)\otimes V_0 \arrow[rr,"\fmap"] && \HFKhat(\Knot_1)\otimes V_1 \arrow[dl,"\gmap"] \\
		& \HFKhat(\Knot_2) \arrow[ul,"\hmap"] &
	\end{tikzcd}
\end{center}
where the homomorphisms are defined explicitly by:
\begin{align*}
	\fmap(\x_0 \otimes \theta_i) &= \delta^1_i F(\x_0 \otimes \onesp_{0,1}) \otimes \theta^0 
	+ \sum_{j \in I} \left(\delta^j_i F(\x_0 \otimes \ones_{0,1}) + \epsilon^j_i F(\x_0 \otimes \onesp_{0,1})\right) \otimes \theta^j \\
	\gmap(\x_1 \otimes \theta^j) &= F(\x_1 \otimes \theta^j) \\
	\hmap(\x_2) &= \sum_{i \in I} F(\x_2 \otimes \partial\theta_i) \otimes \theta_i
\end{align*}

\begin{lem}\label{lem:1st-composition-is-trivial}
	For all $i \in I$ and $j \in J$, we have $F(\theta^j \otimes \thetabar_i) = 0$.
\end{lem}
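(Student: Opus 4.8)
The point is that $F(\theta^j\otimes\thetabar_i)$ is forced to be a $\partial$-boundary in the chain complex $C_{1,0}$, and that this complex is so small that "boundary" means "zero". I would begin by pinning down which triangle map is meant: it is $F=F_{1,2,0}\colon C_{1,2}\otimes C_{2,0}\to C_{1,0}$, with $C_{1,0}$ identified with $C_{0,1}$ via the canonical correspondence between the two presentations of the $2$-pointed unknot $\Link_{1,0}=(B,R_1)\cup\overline{(B,R_0)}\cong(B,R_0)\cup\overline{(B,R_1)}=\Link_{0,1}$ (so $\ones_{1,0}=\ones_{0,1}$, $\onesp_{1,0}=\onesp_{0,1}$, and $\partial_{1,0}$ corresponds to $\partial_{0,1}$). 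Under this identification, (\ref{eq:F-v+-v0}) reads $F(\theta^j\otimes\theta_i)=\epsilon^i_j\ones_{0,1}+\delta^i_j\onesp_{0,1}$ for all $i\in I$ and $j\in J=I\cup\{0\}$. I would also record that $\thetabar_i=\partial_{2,0}\theta_i$ by definition, and that every $\theta^j$ is $\partial_{1,2}$-closed: each $\theta^i=f(\theta_i)$ ($i\in I$) lies in $\Ker(\partial_{1,2})$ by construction of the exactness isomorphism $f$ of Definition~\ref{defn:technical-condition}, and $\theta^0=\ones_{1,2}$ is $\partial_{1,2}$-closed as the chosen representative of the top class.

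Next I would use that the purely-$\beta$ triangle map $F_{1,2,0}$ intertwines the $\partial$-differentials. This holds on the nose: its minus-theory lift $F^-_{1,2,0}$ is an $\Fbb[\var]$-linear chain map and the differentials on $C^-_{1,2},C^-_{2,0},C^-_{1,0}$ are all of the form $\var\cdot\partial$, so dividing the chain-map identity by $\var$ (valid over the domain $\Fbb[\var]$) and reducing modulo $\var$ gives $\partial_{1,0}\circ F=F\circ(\partial_{1,2}\otimes\mathrm{Id}+\mathrm{Id}\otimes\partial_{2,0})$. Hence, using $\partial_{1,2}\theta^j=0$ and $\thetabar_i=\partial_{2,0}\theta_i$,
\begin{align*}
	F(\theta^j\otimes\thetabar_i)&=F\big(\theta^j\otimes\partial_{2,0}\theta_i\big)=\partial_{1,0}\,F(\theta^j\otimes\theta_i)\\
	&=\partial_{1,0}\big(\epsilon^i_j\ones_{0,1}+\delta^i_j\onesp_{0,1}\big).
\end{align*}
Finally, $\Link_{1,0}$ is a $2$-pointed unknot (a single half-twist closed off by the trivial tangle), so $\HFKhat(\Link_{1,0})$ is already $2$-dimensional and equals its own $\partial_{1,0}$-homology $\Fbb\oplus\Fbb\lbl 1\rbr$; thus $\partial_{1,0}=0$, and the displayed expression vanishes, proving $F(\theta^j\otimes\thetabar_i)=0$.

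There is no essential difficulty here; the one thing to get right is the bookkeeping around the identification $C_{1,0}\cong C_{0,1}$ together with the observation that $\partial_{1,0}=0$ (equivalently, that $\HFKhat(\Link_{1,0})$ is $2$-dimensional because $\Link_{1,0}$ is an unknot) — this is precisely what turns "$F(\theta^j\otimes\thetabar_i)$ is a $\partial_{1,0}$-boundary" into "$F(\theta^j\otimes\thetabar_i)=0$". If one prefers to avoid the chain-map identity altogether, one can argue equally directly from the triangle counts: for $\bullet\in\{\ones_{0,1},\onesp_{0,1}\}$ and all $j\in J$ one has $\fmapp(\bullet\otimes\theta^j\otimes\thetabar_i)=0$, since in each clause of (\ref{eq:triangle-count-1}) the right-hand side is a combination of the Kronecker deltas $\delta^{\thetabar_i}_{\theta_{i'}}$, $\delta^{\thetabar_i}_{\ones_{2,0}}$, $\delta^{\thetabar_i}_{\onesp_{2,0}}$, all of which vanish because $\thetabar_i=\partial\theta_i$ is none of $\theta_{i'}$, $\ones_{2,0}$, $\onesp_{2,0}$; since $\{\ones_{0,1},\onesp_{0,1}\}$ spans $\HFKhat(\Link_{0,1})$ and $\fmapp(\,\cdot\,\otimes b\otimes c)$ equals $F(b\otimes c)\in\HFKhat(\Link_{1,0})$ composed with the non-degenerate duality pairing $\HFKhat(\Link_{0,1})\otimes\HFKhat(\Link_{1,0})\to\Fbb$, this again forces $F(\theta^j\otimes\thetabar_i)=0$.
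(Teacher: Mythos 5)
Your proof is correct and takes essentially the same route as the paper's: both write $F(\theta^j\otimes\thetabar_i)=F(\theta^j\otimes\partial_{2,0}\theta_i)$ as $\partial_{1,0}$ (plus $d_{1,0}$) applied to $F(\theta^j\otimes\theta_i)$ (resp.\ $F'(\theta^j\otimes\theta_i)$), using that $\theta^j$ is $\partial$-closed, and then conclude from $\partial_{1,0}=0=d_{1,0}$. Your additional justifications of the chain-map identity and of $\partial_{1,0}=0$, and the alternative argument via the explicit triangle counts and duality, are sound but amount to the same computation the paper records more tersely.
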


\begin{proof}
	Since $\partial_{1,0} = 0$, we compute:
	\begin{align*}
		F(\theta^j \otimes \thetabar_i) = \partial_{1,0} F(\theta^j \otimes \theta_i) + d_{1,0} F'(\theta^j \otimes \theta_i) = 0.% \qedhere
	\end{align*}
\end{proof}

\begin{lem}\label{lem:2-3-compositions-are-trivial}
	For an exact rational tangle $R$:
	\begin{align*}
		\sum_{i \in I} F(\thetabar_i \otimes \theta^i_j) &= 0 \quad \forall j \in J \quad\quad\text{and}\quad\quad
		\sum_{j \in J} F(\theta^i_j \otimes \theta^j) = 0 \quad \forall i \in I
	\end{align*}
\end{lem}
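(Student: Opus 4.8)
The plan is to verify both identities by direct computation. In each case I would substitute the defining expression $\theta^i_j=\delta^j_i\ones_{0,1}+(\epsilon^j_i+\delta^1_i\delta^j_0)\onesp_{0,1}$, which exhibits $\theta^i_j$ as an element of the two-dimensional subspace $\langle\ones_{0,1},\onesp_{0,1}\rangle_\Fbb$ of closed generators, and then expand the triangle maps by means of the explicit evaluations (\ref{eq:F-theta-infty-v+}) and (\ref{eq:F-v0-theta-infty}) recorded for an exact rational tangle. In both cases the outcome is a linear combination of basis vectors in which each one occurs an even number of times, hence it vanishes over the field $\Fbb$ of characteristic two.

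For the second identity, substitution gives, after trimming the range of summation by the stated conventions on the coefficients $\epsilon$,
\[
\sum_{j\in J}F(\theta^i_j\otimes\theta^j)=F(\ones_{0,1}\otimes\theta^i)+\sum_{j\in I}\epsilon^j_i\,F(\onesp_{0,1}\otimes\theta^j)+\delta^1_i\,F(\onesp_{0,1}\otimes\theta^0).
\]
Inserting (\ref{eq:F-theta-infty-v+}) and the value $F(\onesp_{0,1}\otimes\theta^0)=\ones_{2,0}$, the $\onesp_{2,0}$ and $\thetabar_k$ directions do not occur; the two copies of $\delta^1_i\ones_{2,0}$ cancel; and the term $\sum_{k\in I}\epsilon^k_i\theta_k$ coming from $F(\ones_{0,1}\otimes\theta^i)$ cancels the term $\sum_{j\in I}\epsilon^j_i\theta_j$ coming from the middle sum. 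Hence the sum is zero. Equivalently, one may pair with a basis $\{\theta_l,\thetabar_l,\ones_{2,0},\onesp_{2,0}\}$ of $\HFKhat(\Link_{2,0})$ and read off $\sum_j\fmapp(\theta^i_j\otimes\theta^j\otimes c)=0$ directly from (\ref{eq:triangle-count-1}); the same cancellations appear.

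The first identity needs the extra input $\thetabar_i=\partial\theta_i$. Since the internal differential $d_{2,1}$ vanishes, the triangle map $F\colon C_{2,0}\otimes C_{0,1}\to C_{2,1}$ satisfies $\partial F=F\partial$ at the level of homology, and $\ones_{0,1},\onesp_{0,1}$ are $\partial_{0,1}$-closed; hence $F(\thetabar_i\otimes\ones_{0,1})=\partial_{2,1}F(\theta_i\otimes\ones_{0,1})$ and $F(\thetabar_i\otimes\onesp_{0,1})=\partial_{2,1}F(\theta_i\otimes\onesp_{0,1})$. Next I would apply $\partial_{2,1}$ to the expansions (\ref{eq:F-v0-theta-infty}), using that $\partial_{2,1}$ is the transpose of $\partial_{1,2}$ in the chosen basis $\{\theta^k,\thetabar^k\}$, so that $\partial_{2,1}\theta^k=\thetabar^k$ for $k\in I\setminus\{1\}$ while $\partial_{2,1}$ kills $\theta^0,\theta^1$ and every $\thetabar^k$. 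The $\epsilonp^i_k$ and $\deltap^i_k$ terms then die, giving $F(\thetabar_i\otimes\ones_{0,1})=\sum_{k\in I\setminus\{1\}}\epsilon^i_k\thetabar^k$ and $F(\thetabar_i\otimes\onesp_{0,1})=\thetabar^i$ for $i\in I\setminus\{1\}$, with $F(\thetabar_1\otimes\onesp_{0,1})=0$. Feeding these into $\sum_{i\in I}F(\thetabar_i\otimes\theta^i_j)$: for $j\in I$, the contribution $F(\thetabar_j\otimes\ones_{0,1})=\sum_{k\neq1}\epsilon^j_k\thetabar^k$ cancels against $\sum_{i\in I}\epsilon^j_i\,F(\thetabar_i\otimes\onesp_{0,1})=\sum_{i\neq1}\epsilon^j_i\thetabar^i$, while for $j=0$ only the term $F(\thetabar_1\otimes\onesp_{0,1})=0$ survives; so the sum vanishes.

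The step I expect to be the most delicate is the bookkeeping in the first identity: one must keep clearly separated the internal differentials $d_{\kappa,\ell}$, which vanish for the chosen $\betas_\kappa$, from the induced differentials $\partial_{\kappa,\ell}$, with which the triangle maps interact via $\partial F=F\partial$ coming from (\ref{eq:compositios-law}); and one must correctly identify $\partial_{2,1}$ as the transpose of $\partial_{1,2}$ on the chosen basis. This last point is precisely where the hypothesis that $R$ is exact, as encoded in the normal forms (\ref{eq:triangle-count-1})--(\ref{eq:triangle-count-2}) and (\ref{eq:F-v0-theta-infty}), is used; everything else is arithmetic over $\Fbb$. Together with Lemma~\ref{lem:1st-composition-is-trivial}, the two vanishings established here are exactly the trivial-composition conditions required to invoke Proposition~\ref{prop:exact-triangle-quadruples}.
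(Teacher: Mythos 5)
Your argument is correct and follows essentially the same route as the paper: both identities are verified by substituting the definition of $\theta^i_j$, expanding via (\ref{eq:F-v0-theta-infty}) and (\ref{eq:F-theta-infty-v+}), and for the first identity pushing $\partial$ through $F$ so that the $\epsilonp,\deltap$ terms die and the remaining coefficients cancel in pairs over $\Fbb$. Your treatment of the $j=0$ term in the second identity, via $\delta^1_i F(\onesp_{0,1}\otimes\theta^0)=\delta^1_i\ones_{2,0}$, is in fact the consistent reading of the definition of $\theta^i_0$ (the paper's displayed line writes $\ones_{0,1}$ and $\onesp_{2,0}$ there, which does not match its own final cancellation), so no gap remains.
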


\begin{proof}
	For the first identity, exactness implies:
	\begin{align*}
		\sum_{i \in I} F(\thetabar_i \otimes \theta^i_j) &= \partial_{2,1}\left(\sum_{i \in I} \left(\delta^j_i F(\theta_i \otimes \ones_{0,1}) + (\epsilon^j_i + \delta_j^0 \delta^1_i) F(\theta_i \otimes \onesp_{0,1})\right)\right) \\
		&= \sum_{i \in I } \sum_{k \in I\setminus\{1\}} \left(\delta_j^0 \delta^1_i \delta^i_k + \delta^j_i \epsilon^i_k + \epsilon^j_i \delta^i_k\right) \thetabar^k = 0. 
		\quad\quad
		\text{(since $\partial_{2,1}\circ\partial_{2,1}=0$ and using (\ref{eq:F-v0-theta-infty}))}
	\end{align*}	
	For the second identity, note that:
	\begin{align*}
		\sum_{j \in J} F(\theta^i_j \otimes \theta^j) &= \delta^1_i F(\ones_{0,1} \otimes \ones_{1,2}) + \sum_{j \in I} \left(\delta^j_i F(\ones_{0,1} \otimes \theta^j) + \epsilon^j_i F(\onesp_{0,1} \otimes \theta^j)\right) \\
		&=\delta^1_i\onesp_{2,0}+\sum_{j\in I}\delta^j_i\delta^1_j\ones_{2,0}+
		\sum_{j,k\in I} \big(\delta^j_{i}\epsilon^k_j +\epsilon^j_{i}\delta^k_j\big) \theta_k
\quad\quad\quad\quad\quad
\text{(replacing (\ref{eq:F-theta-infty-v+}))}	\\
		&= (\delta^1_i + \delta^1_i)\ones_{2,0} = 0 %\qedhere
	\end{align*}
This completes the proof of the Lemma.
\end{proof}

\subsection{Main Exactness Theorem}

\begin{thm}\label{thm:exactness}
	For an exact rational tangle $R$ with characteristic link triple $(\Link_{0,1}= \Ucal, \Link_{1,2}, \Link_{2,0})$ and every corresponding skein triple $(\Knot_0, \Knot_1, \Knot_2)$, there is an exact triangle:
	\begin{center}
		\begin{tikzcd}[row sep=large, column sep=large]
			\HFKhat(\Knot_0)\otimes \frac{\HFKhat(\Link_{2,0})}{\Ker(\partial)} \arrow[rr,"\fmap"] 
			&& \HFKhat(\Knot_1)\otimes \Ker(\partial|_{\HFKhat(\Link_{1,2})}) \arrow[dl,"\gmap"] \\
			& \HFKhat(\Knot_2) \arrow[ul,"\hmap"] &
		\end{tikzcd}
	\end{center}
\end{thm}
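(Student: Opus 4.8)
The plan is to reduce Theorem~\ref{thm:exactness} to Proposition~\ref{prop:exact-triangle-quadruples} by verifying that the special triple $(T_0,T_1,T_2)$ exhibited in the setup of Section~\ref{sec:exact-triangles} is an \emph{exact} triple of generator sets in the sense of the Definition preceding Proposition~\ref{prop:exact-triangle-quadruples}. Concretely, with $I_2=\{\star\}$, $I_0=I$, $I_1=J=I\cup\{0\}$ and the generators $\theta_i^\star=\thetabar_i$, $\theta_\star^j=\theta^j$, $\theta^i_j=\delta^j_i\ones_{0,1}+(\epsilon^j_i+\delta^1_i\delta^j_0)\onesp_{0,1}$, one must (i) check the \emph{trivial compositions} condition $F(\Theta^j_l)=d\eta^j_l$ for all three cyclic types of pairs, and (ii) check the exactness equation $\Delta^j_i=\delta^j_i\cdot\ones_\kappa \bmod \Hbb^\star_{\kappa,\kappa}$ for all three values of $\kappa$. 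Once these are in place, Proposition~\ref{prop:exact-triangle-quadruples} delivers the exact triangle, and one finally identifies $\oHbb_0=\HFKhat(\Knot_0)\otimes V_0$ with $\HFKhat(\Knot_0)\otimes\HFKhat(\Link_{2,0})/\Ker(\partial)$, $\oHbb_1=\HFKhat(\Knot_1)\otimes V_1$ with $\HFKhat(\Knot_1)\otimes\Ker(\partial|_{\HFKhat(\Link_{1,2})})$, and $\oHbb_2=\HFKhat(\Knot_2)$, matching the stated maps $\fmap,\gmap,\hmap$.

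\textbf{Step 1 (trivial compositions).} I would split $F(\Theta^j_l)=d\eta^j_l$ into its three types according to which index sits in $I_2$. Two of the three types are already done: Lemma~\ref{lem:1st-composition-is-trivial} gives $F(\theta^j\otimes\thetabar_i)=0$, which handles $\Theta$ for the pair $(J,I)$ passing through $I_2=\{\star\}$ (so $\eta=0$ there), and Lemma~\ref{lem:2-3-compositions-are-trivial} gives both $\sum_{i\in I}F(\thetabar_i\otimes\theta^i_j)=0$ and $\sum_{j\in J}F(\theta^i_j\otimes\theta^j)=0$, which are exactly the $\Theta$-vanishings for the remaining two cyclic types (again with $\eta=0$). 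The only $\Theta$ not yet addressed is the ``internal'' composition landing in $C_{\kappa,\kappa}$ for $\kappa=0,1$, where $\theta^\star_i$'s and $\theta^j_\star$'s must be composed against each other; here the $\eta$-terms are genuinely present, and I would extract them from the triangle-count formulas (\ref{eq:triangle-count-1})--(\ref{eq:triangle-count-2}) and (\ref{eq:F-v+-v0}) together with the structure $\theta^k_i=\partial\thetabar^k_i$ of $T_2$, so that the mod-$\Hbb^\star$ part is killed and the leftover is an honest boundary.

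\textbf{Step 2 (exactness equation).} This is where I expect the real work. Using the computation of $\Lambda^i_{i'},\Lambda^j_{j'},\Lambda^k_{k'}$ in terms of the intermediate quantities $\Ri^k_j,\Si^i_k,\Ti^j_i$ already carried out in Section~\ref{sec:skeins} (and the vanishing $\Si^i_k=0$, $\Lambda^k_{k'}$ reducing to $\sum_j F'(\Ri^k_j\otimes\theta^j_{k'})$), I would compute each $\Delta^j_i=H(\Lambda^j_i)+\sum F(\theta\otimes\eta)+\sum F(\eta\otimes\theta)$ modulo $\Hbb^\star_{\kappa,\kappa}$ and show it equals $\delta^j_i\ones_\kappa$. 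For $\kappa=2$, where $I_2$ is a single point, $\Delta$ is scalar and the identity should be essentially $\delta^\star_\star=1$, following from $F(\cdot\otimes\ones_{\kappa,\kappa})=\mathrm{Id}$ and the exactness hypothesis $f$ being an isomorphism. For $\kappa=0,1$, the matrix $\Eps=(\epsilon^i_j)$ enters, and the key algebraic input is that exactness of $R$ forces $\Eps$ to be invertible (it represents the isomorphism $f$ in the chosen bases), so that the bilinear combinations appearing in $\Delta^i_{i'}$ and $\Delta^j_{j'}$ collapse—after substituting (\ref{eq:F-v0-theta-infty}), (\ref{eq:F-theta-infty-v+}), (\ref{eq:F-v+-v0})—to $\delta^i_{i'}\ones_0$ and $\delta^j_{j'}\ones_1$ respectively, modulo lower-homological-grading terms. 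The main obstacle is bookkeeping: one must correctly track the $\eta$-terms produced in Step~1, feed them through the pentagon/rectangle identities (\ref{eq:compositios-law}) and (\ref{eq:compositios-law-2})--(\ref{eq:compositios-law-3}), and verify that everything outside the $\ones_\kappa$-coefficient genuinely lands in $\Hbb^\star_{\kappa,\kappa}$, using repeatedly that $\ones_\kappa$ is not in the image of any $F:\Hbb_{\kappa,\ell}\otimes\Hbb_{\ell,\kappa}\to\Hbb_{\kappa,\kappa}$ for $\ell\neq\kappa$ (Lemma~\ref{lem:vanifhing-of-F1}).

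\textbf{Step 3 (assembly).} With $(T_0,T_1,T_2)$ verified to be exact, Proposition~\ref{prop:exact-triangle-quadruples} yields the exact triangle on $\oHbb_0,\oHbb_1,\oHbb_2$. It remains to read off the maps: by construction the component of $\fmap_0$ from $C_0^i$ to $C_1^j$ is $F(\cdot\otimes\theta^i_j)$, which by the formula for $\theta^i_j$ unwinds to $\delta^j_i F(\x_0\otimes\ones_{0,1})+(\epsilon^j_i+\delta^1_i\delta^j_0)F(\x_0\otimes\onesp_{0,1})$, matching the stated $\fmap$; the component of $\fmap_1$ is $F(\cdot\otimes\theta^j)=\gmap$; and the component of $\fmap_2$ from $C_2^\star=\HFKhat(\Knot_2)$ to $C_0^i$ is $F(\cdot\otimes\thetabar_i)=F(\cdot\otimes\partial\theta_i)$, matching $\hmap$. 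Since $\dim V_2=1$ and $V_0,V_1$ are canonically $\HFKhat(\Link_{2,0})/\Ker(\partial)$ and $\Ker(\partial|_{\HFKhat(\Link_{1,2})})$ by the definition of an exact rational tangle, this is precisely the triangle in the statement, and the degree claims follow from the bigrading shifts $\lbl m_\kappa^j,a_\kappa^j\rbr$ recorded in the definition of exact triples together with the Maslov/Alexander gradings $M(\theta^j_k),A(\theta^j_k)$ of the chosen $\theta$'s.
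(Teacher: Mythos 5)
Your proposal follows essentially the same route as the paper: establish the trivial-compositions condition via Lemmas~\ref{lem:1st-composition-is-trivial} and \ref{lem:2-3-compositions-are-trivial}, verify the exactness condition $\Delta^j_i=\delta^j_i\cdot\ones_\kappa \bmod \Hbb^\star_{\kappa,\kappa}$ at each of the three vertices by pushing $H\circ\partial$ through Lemma~\ref{lem:vanifhing-of-F1} and the explicit counts (\ref{eq:F-v0-theta-infty})--(\ref{eq:F-v+-v0}), and then invoke Proposition~\ref{prop:exact-triangle-quadruples}. Two small corrections: there is no additional ``internal'' $\Theta$-composition with genuinely nonzero $\eta$-terms --- the three cyclic types handled by the two lemmas exhaust the trivial-compositions condition, with $\eta=0$ throughout, and the compositions landing in $C_{\kappa,\kappa}$ are exactly the $\Lambda$'s of your Step~2, not extra $\Theta$'s; and the verification of $\Delta^j_i=\delta^j_i\ones_\kappa$ does not rest on invertibility of the matrix $\Eps$, but on pairwise cancellation of the $\epsilon$-terms over $\Fbb$ combined with the vanishing of $F'$ on $\onesp_{0,1}\otimes\ones_{0,1}$, on $\ones_{2,0}\otimes\theta_k$, and on $\ones_{0,1}\otimes\ones_{0,1}-\onesp_{0,1}\otimes\onesp_{0,1}$.
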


\begin{proof}
	Using Lemmas~\ref{lem:1st-composition-is-trivial} and \ref{lem:2-3-compositions-are-trivial}, it suffices to verify exactness at each vertex:
	
	1. \textbf{Exactness at $\HFKhat(\Knot_2)$:} The element
	\begin{align*}
		\x_2 &= H((\partial \theta_1) \otimes \onesp_{0,1} \otimes \theta^0)  + \sum_{i,j \in I} H\left(\epsilon^j_i ((\partial \theta_i) \otimes \onesp_{0,1} \otimes \theta^j) + \delta^j_i ((\partial \theta_i) \otimes \ones_{0,1} \otimes \theta^j)\right)\\
		&=(H\circ\partial)\Big(\theta_1\otimes \onesp_{0,1}\otimes \ones_{1,2}+\sum_{i,j\in I}\big(\epsilon^j_i  \big(\theta_i\otimes\onesp_{0,1}\otimes \theta^j\big)+\delta^j_i\big(\theta_i\otimes\ones_{0,1}\otimes \theta^j\big)\big)\Big)
	\end{align*}
	equals $\ones_2$ modulo $\Hbb_{2,2}^\star$. For this, note that  since $\partial$ is trivial on $C_{2,2}$, we have $\partial \circ H=0$. Using Lemma~\ref{lem:vanifhing-of-F1} and (\ref{eq:compositios-law-3}) therein,  we may thus compute $\x_2$, as an element in $\Hbb_{2,2}/\Hbb_{2,2}^\star$  as follows:
	\begin{align*}
		\x_2&=F'\big(\theta_1\otimes F(\onesp_{0,1}\otimes \ones_{1,2}\big)\big)+\sum_{i,j\in I}F'\Big(  \theta_i\otimes\big(
		\epsilon^j_i  F\big(\onesp_{0,1}\otimes \theta^j\big)+\delta^j_i F\big(\ones_{0,1}\otimes \theta^j\big)\big)\Big)\\
		&\quad\quad+F'\big(F(\theta_1\otimes \onesp_{0,1})\otimes \ones_{1,2}\big)
		+\sum_{i,j\in I}
		F'\Big( \big(\epsilon^j_i  F\big(\theta_i\otimes\onesp_{0,1}\big)+\delta^j_i F\big(\theta_i\otimes\ones_{0,1}\big)\big) \otimes \theta^j\Big)\\
		%_____________________________________________________
		&=
		F'\big(\onesp_{1,2}\otimes \ones_{1,2}\big)
		+\sum_{i,j\in I, k\in J} \big(\epsilon^j_i  \delta^i_k+\delta^j_i \epsilon^i_k\big) 
		F'\big(\theta^k \otimes \theta^j\big)
		+\sum_{i,j,k\in I}\big(\epsilon^j_i  \delta_j^k+\delta^j_i  \epsilon_j^k\big)
		F'\big(\theta_i\otimes \theta_k\big)=\ones_R.
	\end{align*}
	
	2. \textbf{Exactness at $\HFKhat(\Knot_0)$:} For each $i,i' \in I$, we need to show that
	\begin{align*}
		\y^i_{i'} &= \sum_{j \in J} (H \circ \partial)\left(\delta^j_i (\ones_{0,1} \otimes \theta^j \otimes \theta_{i'}) + \epsilon^j_i (\onesp_{0,1} \otimes \theta^j \otimes \theta_{i'})\right) = \delta^i_{i'} \cdot \ones_0.
	\end{align*}
	Since $\partial\circ H=0$, $d_{0,0}=0$ and $\ones_0$ is not in the image of $F:\Hbb_{0,2}\otimes\Hbb_{2,0}$, we have
	\begin{align*}
		\y^i_{i'}&=\sum_{j\in J}\delta^j_i \Big(
		F'\big(\ones_{0,1}\otimes F(\theta^j\otimes \theta_{i'})\big)+
		F'\big(F(\ones_{0,1}\otimes \theta^j)\otimes \theta_{i'}\big)\Big)\\
		&\quad\quad+\sum_{j\in J}\epsilon^j_i\Big(
		F'\big(\onesp_{0,1}\otimes F(\theta^j\otimes \theta_{i'})\big)+
		F'\big(F(\onesp_{0,1}\otimes \theta^j)\otimes \theta_{i'}\big)\Big)\\
		%-----------------------------------------------
		&=\Big(\epsilon^{i'}_{i}F'(\ones_{0,1}\otimes\ones_{0,1})+
		\delta^{i'}_iF'(\ones_{0,1}\otimes\onesp_{0,1})+\delta^1_i F'(\ones_{2,0}\otimes \theta_{i'})+\sum_{k\in I}\epsilon^k_iF'(\theta_k\otimes \theta_{i'})\Big)\\
		&\quad\quad+\sum_{j\in J}\epsilon^j_i\Big(
		\epsilon^{i'}_jF'(\onesp_{0,1}\otimes\ones_{0,1})+\delta^{i'}_jF'(\onesp_{0,1}\otimes\onesp_{0,1})+\delta^0_jF'(\ones_{2,0}\otimes \theta_{i'})+\sum_{k\in I}\delta^k_jF'(\theta_k\otimes \theta_{i'})\Big)\\
		%-----------------------------------------------	
		&=\delta^i_{i'} \cdot \ones_0+\delta^1_i F'(\ones_{2,0}\otimes \theta_{i'})=\delta^i_{i'} \cdot \ones_0.
	\end{align*}
The last two  equalities follow since $F'$ vanishes on  $\onesp_{0,1}\otimes\ones_{0,1}$, $\ones_{2,0}\otimes \theta_k$ and    $\ones_{0,1}\otimes\ones_{0,1}-\onesp_{0,1}\otimes\onesp_{0,1}$.

	3. \textbf{Exactness at $\HFKhat(\Knot_1)$:} For each $j,j' \in J$, we need to show that
	\begin{align*}
		\z^j_{j'} &= \sum_{i \in I} (H \circ \partial)\left(\delta^{j'}_i (\theta^j \otimes \theta_i \otimes \ones_{0,1}) + \epsilon^{j'}_i (\theta^j \otimes \theta_i \otimes \onesp_{0,1})\right) = \delta^j_{j'} \cdot \ones_1
	\end{align*}
Since $\partial\circ H=0$, $d_{1,1}=0$ and $\ones_1$ is not in the image of $F:\Hbb_{1,2}\otimes\Hbb_{2,1}$, we have
	\begin{align*}
	\z^j_{j'}&=\sum_{i\in I}\delta_i^{j'}\Big(F'\big(F(\theta^j\otimes \theta_i)\otimes \ones_{0,1}\big)+F'\big(\theta^j\otimes F(\theta_i\otimes\ones_{0,1})\big)\Big)\\
	&\quad\quad+\sum_{i\in I}\epsilon_i^{j'}\Big(F'\big(F(\theta^j\otimes \theta_i)\otimes \onesp_{0,1}\big)+F'\big(\theta^j\otimes F(\theta_i\otimes\onesp_{0,1})\big)\Big)\\
	&=\sum_{i\in I}\delta_i^{j'}\Big(\epsilon^i_jF'(\ones_{0,1}\otimes\ones_{0,1})
	+\delta^i_jF'(\onesp_{0,1}\otimes\ones_{0,1})
	+\sum_{k\in I}\epsilon^i_kF'(\theta^j\otimes \theta^k)\Big)\\
	&\quad\quad+\sum_{i\in I}\epsilon_i^{j'}\Big(\epsilon^i_jF'(\ones_{0,1}\otimes\onesp_{0,1})
	+\delta^i_jF'(\onesp_{0,1}\otimes\onesp_{0,1})
	+\sum_{k\in I}\delta^i_kF'(\theta^j\otimes \theta^k)\Big)\\
	&=\epsilon^{j'}_jF'\big(\ones_{0,1}\otimes\ones_{0,1}
	+\onesp_{0,1}\otimes\onesp_{0,1}\big)
	+\delta^{j'}_j\cdot\ones_1
	=\delta^j_{j'} \cdot \ones_1.
\end{align*}	 	
For thelast two equalities, note that $F'(\ones_{0,1}\otimes\onesp_{0,1})=0$ and  $F'(\ones_{0,1}\otimes\ones_{0,1})=F'(\onesp_{0,1}\otimes\onesp_{0,1})$. 
\end{proof}

\section{A Family of Exact Rational Tangles}\label{sec:exact-RTs}
%Our next task is to construct samples of exact rational tangles, besides the known example of the rational tangle corresponding to $-1$.
\begin{thm}
	The rational tangle corresponding to $2n+1$  positive half-twists is  exact.
\end{thm}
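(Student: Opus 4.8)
The plan is to identify the characteristic links of the triple $(R_0,R_1,R_{2n+1})$ explicitly and then to compute the relevant triangle map on an explicit genus-zero Heegaard diagram for the twist region. With $R=R_{2n+1}$ one has (up to the split unlink $\Ucal_{m-1}$, which merely tensors every group and map in sight, so one may take $m=1$): $\Link_{2,0}=(B,R_{2n+1})\cup\overline{(B,R_0)}$ is the $(2,2n+1)$-torus knot carrying the four markings $\ps\cup\qs$; $\Link_{2,1}=(B,R_{2n+1})\cup\overline{(B,R_1)}$ is the $(2,2n)$-torus link with one $\ws$- and one $\zs$-marking on each of its two components; and $\Link_{0,1}=(B,R_0)\cup\overline{(B,R_1)}=\Ucal$ is the unknot, in agreement with Theorem~\ref{thm:exactness}. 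I would fix a Heegaard quadruple $\oH$ on the sphere in which $\beta_0$ is the standard curve for the trivial tangle, $\beta_1$ is its image under a single finger move realizing the positive half-twist, and $\beta_2$ winds $2n+1$ times through the twist region. In such a diagram the complexes $(\HFKhat(\Link_{2,0}),\partial_{2,0})$, $(\HFKhat(\Link_{2,1}),\partial_{2,1})$ and $(\HFKhat(\Link_{1,2}),\partial_{1,2})$ are the standard ``staircase'' complexes of $(2,m)$-torus links, and in particular $\Image(\partial_{2,0})$ and $\Ker(\partial_{1,2})/\langle\ones_{1,2}\rangle$ are free of rank $2n$, with matching bigradings after the shift imposed by the cobordism.

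The heart of the matter is to pin down the triangle map $F(\cdot\otimes\ones_{0,1})\colon\HFKhat(\Link_{2,0})\to\HFKhat(\Link_{2,1})$ attached to $\Hcal_{2,0,1}=(\Sig,\beta_2,\beta_0,\beta_1,\ps,\qs)$ and paired with the top generator $\ones_{0,1}$ of the rank-two group $\HFKhat(\Ucal)$. On the chosen diagram the contributing holomorphic triangles are combinatorial — empty embedded triangles on $S^2$ missing $\ws$ and $\zs$ — and pairing with $\ones_{0,1}$ selects a single such triangle incident to each ``rung'' of the staircase of $\HFKhat(\Link_{2,0})$. I would check, by inspecting the winding region uniformly in $n$, that the resulting chain map carries the $2n$ generators representing $\HFKhat(\Link_{2,0})/\Ker(\partial_{2,0})$ bijectively onto the $2n$ generators representing $\HFKhat(\Link_{2,1})/\langle\Image(\partial_{2,1}),\ones_{1,2}\rangle$; over $\Fbb=\Fbb_2$ there are no signs, so it suffices to verify that exactly one admissible triangle contributes to each such pair and none to the off-diagonal ones. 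Well-definedness of the induced map $f$ of Definition~\ref{defn:technical-condition} is automatic from $\partial\circ F=F\circ\partial$ together with $\partial\ones_{0,1}=0$ and a Maslov-grading count, so the content is precisely this non-vanishing of the diagonal counts together with the identification of the missing rung with $\ones_{1,2}$.

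The main obstacle will be the triangle count itself: organizing the combinatorics of the $(2n+1)$-fold winding region so that it is transparent and uniform in $n$, and confirming that the one rung of the target not hit is exactly the line spanned by $\ones_{1,2}=\theta^0$, rather than some other codimension-one subspace. Two devices should make this manageable. First, one can replace the direct count by an induction on $n$: resolving a crossing in the twist region relates the characteristic links of $R_{2n+1}$ to those of $R_{2n-1}$ (and the intermediate $R_{2n}$), and naturality of the triangle maps under this local move propagates exactness from $R_{2n-1}$ to $R_{2n+1}$, leaving a small base case to be checked by hand. Second, since $F(\cdot\otimes\ones_{0,1})$ commutes with $\partial$ and $\ones_{0,1}$ is the top class of the unknot, the induced map on $H_*(\partial)=\HFhat(S^3,\ws)$ is an isomorphism; combined with the rigidity of the thin staircase complexes of $(2,m)$-torus links, this forces $F(\cdot\otimes\ones_{0,1})$ to have maximal rank modulo $\Ker(\partial_{2,0})$, and a single bidegree-by-bidegree bookkeeping then identifies the cokernel on the quotients of Definition~\ref{defn:technical-condition} with $\langle\ones_{1,2}\rangle$, completing the verification that $R_{2n+1}$ is exact.
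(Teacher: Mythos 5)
Your proposal follows essentially the same route as the paper: identify $\Link_{2,0}$ and $\Link_{1,2}$ with the $(2,2n+1)$- and $(2,2n)$-torus links, realize the triple on an explicit genus-zero Heegaard diagram whose $\beta_2$ winds through the twist region, observe that the knot Floer complexes are the standard staircases, and verify exactness by a direct combinatorial count of small triangles showing that $F(\cdot\otimes\ones_{0,1})$ sends the $2n$ generators of $\HFKhat(\Link_{2,0})/\Ker(\partial_{2,0})$ bijectively to those of $\Ker(\partial_{1,2})/\langle\ones_{1,2}\rangle$ while killing $\theta_0$ and $\thetabar_0$. The two auxiliary devices you hold in reserve (induction on $n$ via crossing resolution, and the rigidity argument on $H_*(\partial)$) are not needed: the paper simply carries out the uniform-in-$n$ triangle count directly.
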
	
\begin{figure}
	\def\svgwidth{0.9\textwidth}
	{\small{
			\begin{center}
				\input{Htriple.pdf_tex}
	\end{center}}}
	\caption{A Heegaard triple $(S^3,\beta_0,\beta_1,\beta_2,\ps,\qs)$.}
	\label{fig:Htriple}
\end{figure} 
\begin{proof}
	In this case, $L_{2,0}=T_{2,2n+1}$, $L_{1,2}=-T_{2,2n}$, $I=\{1,2,\ldots,2n\}$ and
\begin{align*}
\HFKhat(\Link_{2,0})=\langle \theta_i,\thetabar_i\ |\ i=0,1,\ldots,2n\rangle	
\end{align*}	
with $\partial\theta_0=0$, $\partial\thetabar_i=0$ for all $i\in\{0,1,\ldots,n\}$ and $\partial\theta_i=\thetabar_i$ for  $i\in I=\{1,\ldots,n\}$.  In fact, we may construct a Heegaard triple $(S^3,\beta_0,\beta_1,\beta_2,\ps,\qs)$ corresponding to $(\Link_{0,1},\Link_{1,2},\Link_{2,0})$, which is illustrated in Figure~\ref{fig:Htriple} for $n=2$. The general patern for arbitrary $n$ is obvious. The curves $\beta_0,\beta_1$ and $\beta_2$ are illustrated by blue, black and red color, respectively. Having fixed the labels for intersection points as in Figure~\ref{fig:Htriple}, for $i\in\{0,1,\ldots,2n\}$ we may set
\begin{align*}
\theta_{i}=
\begin{cases} x_{i/2}&\text{if $i$ is even}\\ y_{(i+1)/2}&\text{if $i$ is odd}\end{cases}
\quad\quad\quad\text{and}\quad\quad\quad
\thetabar_{i}=
\begin{cases} y_0&\text{if $i=0$}\\ y_i+x_{i-1}& \text{if $i>0$}\end{cases}.	
\end{align*}	
to obtain the aforementioned presentation of $(\HFKhat(\Link_{2,0}),\partial)$.
On the other hand, 
\begin{align*}
	\HFKhat(\Link_{2,1})=\langle \theta^i,\thetabar^i\ |\ i=0,1,\ldots,2n\rangle	
\end{align*}	
with $\partial^*\theta^1=0$, $\partial^*\thetabar^i=0$ for all $i\in I$ and $\partial^*\theta^i=\thetabar^i$ for  $i\in \{2,3,\ldots,n\}$.  The dual differential $\partial=\partial_{1,2}$ is given by $\partial\thetabar^1=0$, $\partial\thetabar^i=\theta^i$ for $i\in\{2,3,\ldots,n\}$ and $\partial\theta^i=0$ for $i\in I$. With reference to Figure~\ref{fig:Htriple}, it suffices that for $i\in\{1,\ldots,n\}$ we set
\begin{align*}
\theta^{i}=
\begin{cases} x^{i/2}&\text{if $i$ is even}\\ y^{(i+1)/2}&\text{if $i$ is odd}\end{cases}
\quad\quad\quad\text{and}\quad\quad\quad
\thetabar^{i}=
\begin{cases} y^1&\text{if $i=1$}\\ y_i+x_{i-1}& \text{if $i>1$}\end{cases}.		
\end{align*}	

The triangle map $F(\cdot\otimes\ones_{0,1})$ is then given by 
\begin{align*}
&{f}=F(\cdot\otimes\ones_{0,1}):\HFKhat(\Link_{2,0})\ra \HFKhat(\Link_{2,1}),\\
&{f}(\theta_i)=\theta^i\quad\text{and}\quad {f}(\thetabar_i)=\thetabar^i\quad\forall\ i\in I
\quad\quad\quad\text{and}\quad\quad\quad
{f}(\theta_0)={f}(\thetabar_0)=0.	
\end{align*}
Note that $\ones_{1,2}=\thetabar^1$ and $\onesp_{1,2}=\theta^1$, while $\ones_{2,0}=\theta_0$ and $\onesp_{2,0}=\thetabar_0$. The above conclusion follows from a direct computation using the simple Heegaard triple of Figure~\ref{fig:Htriple}.
In fact, there are small triangles $\Delta_i$ connecting the top generator $\ones=\ones_{0,1}$, $x_i$ and $x^i$ for $i=1,\ldots,2n$, small triangles $\Delta'_i$ connecting $\ones,y_i$ and $y^i$ for $i=2,\ldots,n$, and a triangle $\Delta'_1$ which connects $\ones, x_0$ and $y^1$. One can check that these are the only possible contributing triangles for the holomorphic triangle map ${f}$. This implies the above claim about the evaluation of ${f}$ on $\theta_i$ and $\thetabar_i$.\\
 
In particular, it follows from the above observation/computation that the induced map
\begin{align*}
	f:\frac{\HFKhat(\Link_{2,0})}{\Ker(\partial_{2,0})}=\langle \theta_i\ |\ i\in I\rangle 
		\ra 
		\frac{\Ker(\partial_{1,2})}{\langle\ones_{1,2}\rangle}=
		\langle \theta^i\ |\ i\in I\rangle
\end{align*}	
is an isomorphism (given by sending $\theta_i$ to $\theta^i$ for every $i\in I$). This completes the proof of the theorem once we use Theorem~\ref{thm:exactness}.
\end{proof}	
%\newpage
\bibliographystyle{hamsalpha}

\end{document}